\documentclass[letterpaper, 10pt, conference]{ieeeconf}

\IEEEoverridecommandlockouts                              
\overrideIEEEmargins

\usepackage{amsmath,amsthm,enumerate,pdfsync,asymptote}  
\usepackage{setspace} 
\usepackage{graphicx,subfig}
\usepackage{amssymb}
\usepackage{setspace}
\usepackage{graphics,framed}
\usepackage{pgf}
\usepackage{tikz}
\usepackage{pgfplots} 

\newtheorem{Definition}{Definition}
\newtheorem{Example}{Example}
\newtheorem{Proposition}{Proposition}
\newtheorem{Theorem}{Theorem}

\newtheorem{Corollary}{Corollary}	
\newtheorem{Remark}{Remark}
\newcommand{\R}{\mathbb{R}}

\usetikzlibrary{calc,fadings,decorations.pathreplacing}

\newcommand{\scone}[3]{%
\begin{scope}[rotate=#3,xshift=#1,yshift=#2]
\def\mypath{ (.-.2,0) -- +(-.2,.8) arc (180:0:.8) -- +(-.2,-.8) arc (-180:0:.2) } 
\fill [gray] mypath;
}


\tikzset{%
  >=latex, 
  inner sep=0pt,%
  outer sep=2pt,%
  mark coordinate/.style={inner sep=0pt,outer sep=0pt,minimum size=3pt,
    fill=black,circle}%
}

\makeatletter
\newsavebox{\sfe@box}
{\color@endgroup\egroup\subfloat[\sfe@caption]%
{\usebox{\sfe@box}}}
\makeatother

\title{\LARGE \bf
Mathematical aspects of decentralized control of formations in the plane}

\author{M.-A. Belabbas 
\thanks{M.-A. Belabbas is with the School of Engineering and Applied Sciences, Harvard
University, Cambridge, MA 02138 {\tt\small belabbas@seas.harvard.edu}}%
}

\begin{document}

\maketitle

\begin{abstract}                
In formation control, an ensemble of autonomous agents  is required to stabilize at a given configuration in the plane, doing so while agents are allowed to observe only a subset of the ensemble. As such, formation control provides a rich class of problems for decentralized control methods and techniques. Additionally, it can be used to model a wide variety of scenarios where decentralization is a main characteristic. We  introduce here  some mathematical background necessary to address questions of stability in decentralized control in general and formation control in particular. This background includes an extension of the notion of global stability to systems evolving on manifolds and a notion of robustness of feedback control for nonlinear systems. We then formally introduce the class of formation control problems, and summarize known results.
\end{abstract}

\section{Introduction}

We present here some concepts and definitions related to the study of decentralized  and multi-agent systems in general and to formation control in particular. 

We start with the introduction of \emph{type-A stability}. It has been known since at least Poincar\'e that the topology of the manifold on which a system evolves strongly affects the type of dynamics that are possible. In particular, global stability as it is defined for systems on vector spaces is often trivially impossible when the manifold is not a vector space. We propose here a definition that is meaningful for systems evolving on manifold and  captures the practical  benefits of global stabilization. 

The second definition is the one of \emph{robustness}. When solving a control design problem, one is faced with finding a control $u^*$, belonging to admissible set of control $\mathcal U$, that achieves a given objective, e.g. stabilization around a given configuration.  In real-world applications, one is of course often confronted to errors in modelling, noise in the inputs or in the observations, or other sources of uncertainty that may make a control law designed for an ideal situation fail. We introduce below a notion of robustness, akin to the one of linear systems theory, that allows us to handle such situation.

The  introduction of robustness comes with an unexpected benefit: a simplification of the design problem.  Indeed, if there exist a control law that achieves a given objective non-robustly, this control would be quite difficult to find. In practical terms, robustness allows us to confine our search to the  jet-space of lowest possible order~\cite{belabbasSICOpart2} (we give a brief introduction to jet spaces in the appendix). 

In Section~\ref{sec:formation}, we formally introduce the class of formation control problems. Our approach, which puts at the center \emph{configurations of points}, and allows us to understand the role of rigidity theory as a way to decentralize the \emph{global objective}: in the language of the companion paper~\cite{belabbas_icca2011_knowns}, rigidity has to do with the $\delta$ functions, and using it to define the information flow is thus in many ways unnatural.

We conclude by  summarizing what is known about formation control and about the so-called  2-cycles formation~\cite{belabbascdc11sub2,cao2010festschrift}. We mentioned in~\cite{belabbas_icca2011_knowns} that a major issue in decentralization is the existence of nontrivial loops of information---that is loop of informations that the system cannot by-pass. The 2-cycles is the simplest formation that exhibits two nontrivial loops in its information flow graph. These information loops are the main source of difficulty in the analysis of the system~\cite{cao2010festschrift}.
 
\section{Type-A stability}

Many natural and engineering systems are described by a differential equation evolving on a manifold $M$, by opposition to a flat space or vector space. For example, the orientation of a rigid body in space is described by a point in the Lie group $SO(3)$~\cite{bloch_nonholonbook_03}; another example arise in formation control: we have shown~\cite{belabbascdc11sub1} that, due to the invariance of the   system under rotations and translations, the state-space of $n$ autonomous agents in the plane is given by the manifold $\mathbb CP(n-2)\times (0,\infty)$. 

When the system evolves on a manifold, global notions such as global stabilization need to be adjusted to remain relevant. This is the issue addressed by type-A stability.

Consider the control system \begin{equation}
\label{eq:sysu1}\dot x = f(x,u(x))
\end{equation} where $x \in {M}$, a smooth manifold, and all functions are assumed smooth.

According to elementary results in Morse theory~\cite{milnormorse}, if  the manifold $M$ possesses non-trivial homology groups~\cite{warner83}, the system~\eqref{eq:sysu1} cannot be globally stable in the usual sense:  there is no continuous $u$ such that~\eqref{eq:sysu1} has a \emph{unique} equilibrium.

From a practical standpoint, however, if one could make  one equilibrium stable, and all other equilibria either saddles or unstable, the system would behave as if it were globally stable. Indeed, a vanishingly small perturbation would ensure that the system, if at a saddle or unstable equilibrium,  evolves to the unique stable equilibrium. We formalize and elaborate on this observation. 

Let $\mathcal{E}_d$ be a finite subset of $M$ containing  configurations that we would like to stabilize via feedback.   We are thus interested in the design of a smooth feedback control $u(x)$ that will stabilize the system to any point $x_0\in \mathcal{E}_d$. We call these points the \emph{design targets} or \emph{design equilibria}:

$$\mathcal{E}_d = \lbrace x_0 \in M \mbox{ s.t. } x_0 \mbox{ is a design equilibrium} \rbrace $$

Let $$\mathcal{E}  = \lbrace x_0 \in M \mbox{ s.t. } f(x_0,u(x_0)) = 0 \rbrace, $$ the set of equilibria of~\eqref{eq:sysu1}. We  assume that $\mathcal{E}$ is {finite}. 

As explained above, when the system evolves on a non-trivial manifold, the Morse inequalities make it unreasonable  to expect that there exists a control $u(x)$ that makes the design equilibria the \emph{only} equilibria of the system, i.e. a control such that $\mathcal{E}_d= \mathcal{E}$.  We call the additional equilibria, that are introduced by the non-trivial topology of the space, \emph{ancillary equilibria}: $$\mathcal{E}_a = \mathcal{E}-\mathcal{E}_d.$$

Let us assume for the time being that the linearization of the system at an equilibrium has no  eigenvalues with zero real part. We decompose the set $\mathcal{E}$ into \emph{stable} equilibria, by which we mean  equilibria such that \emph{all the eigenvalues} of the linearized system have a negative real part, and \emph{unstable equilibria}, where \emph{at least one eigenvalue} of the linearization has a positive real part. Observe that under this definition,   saddle points are considered unstable.

In summary: $$\mathcal{E} = \mathcal{E}_s \cup \mathcal{E}_u$$ where
$$\mathcal{E}_s = \lbrace x_0 \in \mathcal{E}\ |\ x_0 \mbox{ is stable} \rbrace$$ and
$$\mathcal{E}_u = \lbrace x_0 \in \mathcal{E}\  |\ x_0 \mbox{ is unstable} \rbrace.$$

With these notions in mind, we introduce the following definition:

\begin{Definition}
Consider the smooth control system $\dot x = f(x,u(x))$ where $x \in M$ and the set $\mathcal{E}$ of equilibria of the system is finite. Let $\mathcal{E}_d \subset M$ be a finite set. We say that $\mathcal{E}_d$ is
\begin{enumerate}
\item \emph{feasible} if we can choose a smooth $u(x)$ such that $\mathcal{E}_d \cap \mathcal{E} \neq \varnothing$.
\item \emph{type-A stable} if we can choose a smooth $u(x)$ such that $\mathcal{E}_s \subset \mathcal{E}_d$.
\item \emph{strongly type-A stable} if we can choose a smooth $u(x)$ such that $\mathcal{E}_s = \mathcal{E}_d$.
\end{enumerate}
When the set $\mathcal{E}_d$ is clear from the context, we say that the system is feasible or type-A stable. 
\end{Definition}

This definition extends trivially to systems depending on a parameter. The set $\mathcal{E}_d$ is feasible if we can choose $u(x)$ such that \emph{at least one} equilibrium  of the system is a design target. It is said to be \emph{type-A} stable if the system  stabilizes to $\mathcal{E}_d$ with probability one for any randomly chosen initial conditions on $M$.  It is \emph{strongly type-A} stable if it is type-A stable and moreover all elements of $\mathcal{E}_d$  are stable equilibria. The usual notion of global stability is a particular instance of type-A stability; indeed, it  corresponds to having $u(x)$ such that  $\mathcal{E}_d=\mathcal{E}=\mathcal{E}_s$. 

Looking at  the contrapositive of this definition,  a system  is \emph{not type-A stable} if there exists a set of initial conditions,  of strictly positive measure, that lead to an ancillary equilibrium.  We observe that type-A stability is  a global stability notion; in particular,  if one can choose $u$ such that all design equilibria are locally stable, but if this choice forces the appearance of other, undesired equilibria which are also locally stable, the system is not type-A stable. The example below illustrate these notions.

\begin{Example}
Consider a system $$\dot x = x(1-kx^2)$$ where $k \in \R$ is a feedback parameter to be chosen by the user. We show that any $\mathcal{E}_d \subset (0,\infty)$ is not type-A stable. We first observe that the system has an equilibrium at $0$ and two equilibria at $x = \pm \sqrt{1/k}$ if $k >0$.  The system is thus feasible for any $\mathcal{E}_d \subset \R$.  The Jacobian of the system is $1$ at $x=0$ and $-2$ at $x=\pm \sqrt{1/k}$. For $k>0$, the above says that $$\mathcal{E}= \lbrace 0, \pm \sqrt{1/k} \rbrace= \underbrace{\lbrace \sqrt{1/k} \rbrace}_{\mathcal{E}_d} \cup \underbrace{ \lbrace 0, -\sqrt{1/k} \rbrace}_{\mathcal{E}_a}.$$  From the linearization of the system, we have that $$\mathcal{E}_s = \lbrace \pm \sqrt{1/k}\rbrace \mbox{ and }\mathcal{E}_u = \lbrace 0 \rbrace.$$ We conclude that $\mathcal{E}_s \nsubseteq \mathcal{E}_d$ and the system is not type-A stable. 
\end{Example}

\section{Robustness}\label{sec:sing}
We introduce here a definition of robustness for nonlinear systems. We start by discussing the well-established concept of generic elements, on which our definition of robustness is based.

Informally speaking, a property of elements of a topological space is said to be \emph{generic} if it is shared by \emph{almost all} elements of the set. 
\begin{Definition} A property $\mathcal{P}$ is \emph{generic} for a topological space $S$ if it is true on an everywhere dense intersection of open sets of $S$.
\end{Definition}

Everywhere dense intersections of open sets are sometimes  called \emph{residual} sets~\cite{arnold_bifurcation}. In general, asking for a given property to be generic is a rather strong requirement, and oftentimes it is enough to show that a given property is true on an open set of parameters, initial conditions, etc. We define

\begin{Definition} An element $u$ of a topological space $S$ satisfies the  property $\mathcal{P}$  \emph{robustly} if $\mathcal P $ is true for all $u'$  in a neighborhood of $u$ in $S$. A property $\mathcal P$ is robust if there exists a robust $u$ which satisfies the property.
\end{Definition}

In practical terms, if a property satisfied only at \emph{non-robust} $u$'s, then it will likely fail to be satisfied under the slightest  error in modelling or measurement. 

\begin{Remark}
We emphasize that when we seek a robust control law $u(x)$ for stabilization, we seek a control law such that  the equilibrium that is to be stabilized remains stable under small perturbations in $u(x)$. The equilibrium, however, may move in the state space. For example, assume that the system $$\dot x = u(x)$$ has the origin as a stable equilibrium. If for all $g(x)$ in an appropriate set of perturbations, the system $$\dot x=u(x) + \varepsilon g(x)$$ has a stable equilibrium at a point $z(\varepsilon)$ near the origin, then the control law $u(x)$ is robust. If, on the contrary, the equilibrium disappears or becomes unstable, then $u(x)$ is not robust.
\end{Remark}

If $\rceil \mathcal P$, the negation of $\mathcal P$, is generic, then there is no robust $u$ that satisfies $\mathcal P$. Indeed, if $\rceil \mathcal P$ is generic, then $\mathcal{P}$ is verified on at most a nowhere dense closed set. In particular, $\mathcal P$ is not verified on an open set. The main tool to handle genericity are jet spaces and Thom transversality theorem. We will use the results in some parts below and refer the reader to the appendix and to~\cite{belabbasSICOpart2} for more information.

\section{Formation control}\label{sec:formation}

We present here the class of formation control problems in the plane. This class provides a rich set of examples and models for decentralized control.

We begin with some preliminaries. We call a \emph{configuration of $n$ points in the plane} an equivalence class, under rotation and translation, of $n$ points in $\R^2$, see Figure~\ref{fig:xbar8} for an example. We have shown in~\cite{belabbascdc11sub1} that the space of such normalized equivalence classes was a complex projective space.

Let $G=(V,E)$ be a \emph{graph} with $n$ vertices --- that is $V = \lbrace x_1 ,x_2,\ldots,x_n \rbrace$  is an ordered set of vertices and $E \subset V \times V$ is a set of edges. The graph is said to be \emph{directed} if $(i,j) \in E$ does not imply that  $(j,i) \in E$.  We let $|E| = m $ be the cardinality of $E$. We call the \emph{outvalence} of a vertex the number of edges originating from this vertex and the \emph{invalence} the number of incoming edges.

\subsection{Rigidity }
We briefly cover the fundamentals of rigidity and establish the relevant notation. We refer the reader to~\cite{belabbascdc11sub1,belabbasSICOpart1} for a more detailed presentation. 
We call a \emph{framework} an embedding of a graph in $\R^2$ endowed with the usual Euclidean distance, i.e. given $G=(V, E)$, a framework $p$ attached to a graph $G$ is a mapping \begin{equation*} p: V \rightarrow \R^2.
\end{equation*}

By abuse of notation, we  write $x_i$ for $p(x_i)$. We define the  \emph{distance function} $\delta$ of a framework with $n$ vertices as \begin{multline*}
\delta(p): \R^{2n} \rightarrow \R_+^{n(n-1)/2}:(x_1, \ldots, x_n)  \rightarrow \frac{1}{2}\left[ \|x_1-x_2\|^2, \right. \\ \left. \ldots,  \|x_1-x_n\|^2,  \| x_2-x_3\|^2, \ldots,  \|x_{n-1}-x_n\|^2  \right],
\end{multline*} where $\R^+ = [0, \infty) $. We denote by $\delta(p)|_E$ the restriction of the range of $\delta$ to edges in $E$.

For a graph $G$ with $m$ edges, we define \begin{multline*}\mathcal{L}=\left\lbrace d=(d_1,\ldots,d_m ) \in \R^m_+ \mbox{ for which }  \right.\\ \left. \exists p \mbox{ with } \delta(p(V))|_E={d} \right\rbrace,\end{multline*} where the square root of $d$ is taken entry-wise. Properties of this set and its relations to the number of ancillary equilibria are discussed in~\cite{belabbasSICOpart1,belabbascdc11sub1}.

The \emph{rigidity matrix} of the framework is the Jacobian   $\frac{\partial \delta}{\partial x}$  restricted to the edges in $E$. We denote it by $\frac{\partial \delta}{\partial x}|_E$. 

\begin{Definition}[Rigidity]
\begin{enumerate}
\item A framework is said to be infinitesimally rigid if there are no vanishingly small motions of the vertices, except for rotations and translations,  that keep the edge-length constraints on the framework satisfied. This translates into~\cite{graver93}
$$\operatorname{rank} (\frac{\partial \delta}{\partial x}|_{E})=2n-3.$$
\item  A framework attached to a graph $G$ is said to be \emph{rigid} if there are no motions of the vertices that keep the edge lengths constraints satisfied and \emph{minimally rigid} if all the edges of the graph are necessary for rigidity. 
\end{enumerate}
\end{Definition}

\subsection{Formation control: definition and open problems}

We present here the definition of formation control problems. We build the problem around configurations of points in the plane, by opposition to distances between vertices; this allows us to understand the role of rigidity in formation control as a tool to address the distribution of the global objective or---with the notation of the companion paper~\cite{belabbas_icca2011_knowns}---as a tool to determine which $\delta_i$ are sufficient. This point of view also makes clear that there is no reason to assume that the functions $h_i$ describing the information flow should be given by a rigid graph. In fact, this  overload of $G$ in formation control is a limiting factor as is illustrated in Section~\ref{sec:2cycles}.

We let $x \in \R^{2n}$ contain the positions of all the agents in the formation and consider   general dynamical models of the form. 
\begin{equation}
\dot x = \sum_{i=1}^n \sum_{j=1}^{n_i} u_{ij}(\delta_i(\mu);h_i(x)) g_{ij}(x),
\end{equation} where $u_{ij}$ is a real function, $g_{ij}$ are smooth vector fields and $\delta, h$ are smooth vector valued functions. We analyzed this model in detail in the companion paper~\cite{belabbas_icca2011_knowns}

\subsubsection{Configurations of n-points}

The objective in formation control is a parametric one. Let $P$ be the space of configurations of $n$ points in $\R^2$, up to rigid transformations of the plane: i.e. a point in $P$ is an equivalence class of points in $\R^{2n}$.  For our purpose here, it is enough to describe a configuration of $n$ points in the place by an element of $\R^{2{n-1}}$, where we use the translational degree of freedom to set the first point at the origin in $\R^2$. We represent a design formation by a vector $$\mu \in \R^{2(n-1)}=[\bar x_2, \ldots, \bar x_n ], \bar x_i \in \R^2$$ as illustrated in  in Figure~\ref{fig:xbar8}. The vector $\mu$ is thus a representative of the equivalence class of points obtained via rotation of the $\bar x_i$.

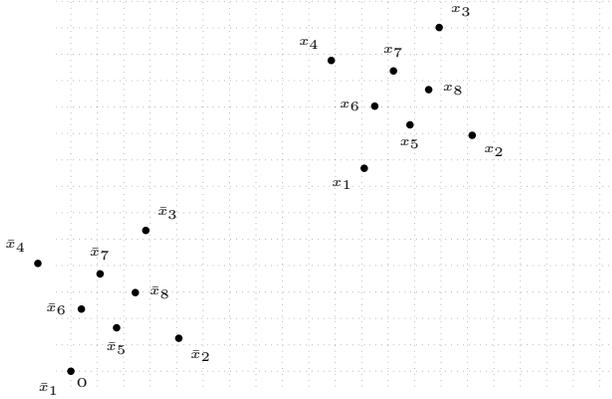
\begin{figure}
\begin{center}
\begin{tikzpicture}
\draw[help lines, step=10pt,dotted](-.2,-.2) grid (7.2,5.2);
\begin{scope}[scale=1.5,xshift=2.6cm, yshift=1.8cm, rotate=17]
\node [fill=black,circle, inner sep=1pt,label=225:{\tiny $x_1$}] (1) at (0,0) {};
\node [fill=black,circle, inner sep=1pt,label=-45:{\tiny $x_2$}] (2) at (1,0) {};
\node [fill=black,circle, inner sep=1pt,label=135:{\tiny $x_4$}] (4) at (0,1) {};
\node [fill=black,circle, inner sep=1pt,label=45:{\tiny $x_3$}] (3) at (1,1) {};

\node [fill=black,circle, inner sep=1pt,label=-90:{\tiny $x_5$}] (1) at (0.5,0.25) {};
\node [fill=black,circle, inner sep=1pt,label=180:{\tiny $x_6$}] (2) at (.25,.5) {};
\node [fill=black,circle, inner sep=1pt,label=90:{\tiny $x_7$}] (4) at (.5,.75) {};
\node [fill=black,circle, inner sep=1pt,label=0:{\tiny $x_8$}] (3) at (.75,.5) {};
\end{scope}
\begin{scope}[scale=1.5,rotate=17]
\node [fill=black,circle, inner sep=1pt,label=225:{\tiny $\bar x_1$}] (1) at (0,0) {};
\node [fill=black,circle, inner sep=1pt,label=-45:{\tiny $\bar x_2$}] (2) at (1,0) {};
\node [fill=black,circle, inner sep=1pt,label=135:{\tiny $\bar x_4$}] (4) at (0,1) {};
\node [fill=black,circle, inner sep=1pt,label=45:{\tiny $\bar x_3$}] (3) at (1,1) {};

\node [fill=black,circle, inner sep=1pt,label=-90:{\tiny $\bar x_5$}] (1) at (0.5,0.25) {};
\node [fill=black,circle, inner sep=1pt,label=180:{\tiny $\bar x_6$}] (2) at (.25,.5) {};
\node [fill=black,circle, inner sep=1pt,label=90:{\tiny $\bar x_7$}] (4) at (.5,.75) {};
\node [fill=black,circle, inner sep=1pt,label=0:{\tiny $\bar  x_8$}] (3) at (.75,.5) {};
\end{scope}
\node (or) at (0.15,-.15) {\tiny O};
\end{tikzpicture}
\end{center}
\caption{Configurations of $n$ points in the plane, up to translations, can be represented by a configuration with $x_1$ at the origin. We use the notation $\bar x_i$ for the coordinates of the points after translation.}\label{fig:xbar8}
\end{figure}

In order to represent the decentralized structure of a formation control problem, we introduce two graphs: the $\delta$-graph and the $h$-graph, representing respectively the information a given agent has about the global objective of the formation ($\delta$-graph) and about the state of the formation ($h$-graph).

In more detail, to each agent with position $x_i \in \R^2$ we associate a vertex $i$ in $V$. We let $G$ be a graph with vertex set $V$ and set of edges $E$. The edges define the decentralized structure as follows: assume that vertex $i$ has outvalence $k$ and  the edges $(i,{l_1}), \ldots, (i, {l_k}) \in E_h$ are leaving from  vertex $i$.  The $h$-graph $G_h=(V,E_h)$ defines the functions $h_i(x)$ according to:

\begin{enumerate}
\item[1:] Range only information: in this case, an agent is only able to measure its distance to its various neighbors.
$$h^R_i(x): \R^{2n} \rightarrow \R^{k}: x \rightarrow (\|x_{l_1}-x_i\|, \ldots,\|x_{l_k}-x_i \|).$$

\item[2:] Relative position information: in this case, agent $i$ can measure the \emph{relative positions of its neighbors}. Using some simple trigonometric rules, it is easy to see that in order to reconstruct the relative positions of its neighbors (i.e. their position relative to $x_i$, up to rotation), it is sufficient for agent $i$ to have the distances to the agents as well as the inner products $(x_{l_1}-x_i)^T(x_{l_j}-x_i)$, $j=2\ldots k$. We have
\begin{multline*}h_i(x)= (h^R_i(x), (x_{l_1}-x_i)^T(x_{l_2}-x_i)^T, \ldots, \\(x_{l_k}-x_i)^T(x_{l_1}-x_i)).\end{multline*}
\end{enumerate}
A \emph{formation} is an ensemble of agents together with an $h$-graph.

\begin{Remark} The h-graph, which is proper to formation control, is related to the information flow graph defined~\cite{belabbas_icca2011_knowns}.
\end{Remark}
The function $\delta_i$ are similarly described by a graph $G_\delta=(V,E_\delta)$. Assume that vertex $i$ has outvalence $k$ and  the edges $(i,{l_1}), \ldots, (i, {l_k}) \in E_\delta$ are leaving from  vertex $i$.  We have

\begin{enumerate}
\item[1:] Range only information: an agent only knows about the distance at which it needs to stabilize from its neighbors.
$$\delta_i(\mu) = (\|\bar x_{l_1}-\bar x_i\|^2, \ldots, \|\bar x_{l_k}-\bar x_i\|^2).$$
\item[2:] Range and angle: in this case, the agents also knows the relative position at which its neighbors are in the target framework:
\begin{multline*}\delta_i(\mu) =(\delta^R_i(\mu), (\bar x_{l_1}-\bar x_i)^T(\bar x_{l_2}-\bar x_i)^T, \ldots, \\(\bar x_{l_k}-\bar x_i)^T(\bar x_{l_1}-\bar x_i)).\end{multline*}

\end{enumerate}

\begin{Example}
Consider a formation control problem where we require the agents to stabilize at the configuration of points described in Figure~\ref{fig:dhc}. 

The $h$-graph of Figure~\ref{fig:dhh1} corresponds to the observation functions, assuming the relative position case:
\begin{eqnarray*}
h_1(x) &=&  \|x_2-x_1\| \\
h_2(x) &=&  \left(\|x_3-x_2\|,\|x_4-x_2\|,(x_3-x_2)^T(x_4-x_2) \right)  \\
h_3(x) &=&  \|x_3-x_1\| \\
h_4(x) &=&  \|x_4-x_1\| \\
h_5(x) &=&  \left(\|x_2-x_5\|,\|x_3-x_5\|,(x_2-x_5)^T(x_3-x_5) \right) .
\end{eqnarray*}
Similarly,  the $h$-graph of Figure~\ref{fig:dhh2} corresponds to
\begin{eqnarray*}
h_1(x) &=&  \|x_2-x_1\| \\
h_2(x) &=&  \left(\|x_3-x_2\|,\|x_4-x_2\|,(x_3-x_2)^T(x_4-x_2) \right)  \\
h_3(x) &=&  \|x_3-x_5\| \\
h_4(x) &=&  \|x_4-x_5\| \\
h_5(x) &=&  \left(\|x_2-x_5\|,\|x_1-x_5\|,(x_1-x_5)^T(x_2-x_5) \right) .
\end{eqnarray*}
The $\delta$-graph of Figure~\ref{fig:dhd1} corresponds to the functions $\delta_i$ given by 
\begin{eqnarray*}
\delta_1(\mu) &=&  \|\bar x_2-\bar x_1\| \\
\delta_2(\mu) &=&  \left(\|\bar x_3-\bar x_2\|,\|\bar x_4-\bar x_2\| \right)  \\
\delta_3(\mu) &=&  \|\bar x_3-\bar x_5\| \\
\delta_4(\mu) &=&  \|\bar x_4-\bar x_5\| \\
\delta_5(\mu) &=&  \left(\|\bar x_2-\bar x_5\|,\|\bar x_1-\bar x_5\| \right) 
\end{eqnarray*}
in the case of distance only information. In the case of relative position information, $\delta_2$ and $\delta_4$ would also contain the inner products of the appropriate $\bar x_i$. The $\delta$-graph of Figure~\ref{fig:dhd2} corresponds to letting every agent know its distance to all other agents in the case  of range only information, and letting $\delta_i(\mu)=\mu$ for all $i$ in the case of relative position.

\end{Example}

\begin{figure}
\begin{center}
\subfloat[]{
\begin{tikzpicture}[scale=1.2]
\draw[help lines, step=10pt,dotted](-.2,-.2) grid (2.2,1.9);
\node [fill=black,circle, inner sep=1pt,label=225:{\tiny $\bar x_1$}] (1) at (0,0) {};
\node [fill=black,circle, inner sep=1pt,label=-45:{\tiny $\bar x_2$}] (2) at (2,0) {};
\node [fill=black,circle, inner sep=1pt,label=180:{\tiny $\bar x_4$}] (4) at (0,1) {};
\node [fill=black,circle, inner sep=1pt,label=45:{\tiny $\bar x_3$}] (3) at (1,1.5) {};
\node [fill=black,circle, inner sep=1pt,label=90:{\tiny $\bar  x_5$}] (5) at (.5,1) {};
\end{tikzpicture}\label{fig:dhc}
}\\
\subfloat[]{
\begin{tikzpicture}[scale=1.4]
\node [fill=black,circle, inner sep=1pt,label=225:{\tiny $x_1$}] (1) at (0,0) {};
\node [fill=black,circle, inner sep=1pt,label=-45:{\tiny $x_2$}] (2) at (1,0) {};
\node [fill=black,circle, inner sep=1pt,label=180:{\tiny $x_4$}] (4) at (-.15,1.1) {};
\node [fill=black,circle, inner sep=1pt,label=0:{\tiny $x_3$}] (3) at (1.15,1) {};
\node [fill=black,circle, inner sep=1pt,label=90:{\tiny $x_5$}] (5) at (.5,1.58) {};

\draw [,-stealth ] (1) -- (2) node [ midway,above,  sloped, blue] {}; 
\draw [,-stealth ] (3) -- (1) node [ midway,above,  sloped, blue] {}; 
\draw [,-stealth ] (2) -- (3) node [ midway,above,  sloped, blue] {}; 
\draw [,-stealth ] (2) -- (4) node [ midway,above,  sloped, blue] {}; 
\draw [,-stealth ] (5) -- (2) node [ midway,above,  sloped, blue] {}; 
\draw [,-stealth ] (5) -- (3) node [ midway,above,  sloped, blue] {}; 
\draw [,-stealth ] (4) -- (1) node [ midway,above,  sloped, blue] {}; 

\end{tikzpicture}\label{fig:dhh1}
}\quad
\subfloat[]{
\begin{tikzpicture}[scale=1.4]
\node [fill=black,circle, inner sep=1pt,label=225:{\tiny $x_1$}] (1) at (0,0) {};
\node [fill=black,circle, inner sep=1pt,label=-45:{\tiny $x_2$}] (2) at (1,0) {};
\node [fill=black,circle, inner sep=1pt,label=180:{\tiny $x_4$}] (4) at (-.15,1.1) {};
\node [fill=black,circle, inner sep=1pt,label=0:{\tiny $x_3$}] (3) at (1.15,1) {};
\node [fill=black,circle, inner sep=1pt,label=90:{\tiny $x_5$}] (5) at (.5,1.58) {};

\draw [,-stealth ] (1) -- (2) node [ midway,above,  sloped, blue] {}; 
\draw [,-stealth ] (3) -- (5) node [ midway,above,  sloped, blue] {}; 
\draw [,-stealth ] (2) -- (3) node [ midway,above,  sloped, blue] {}; 
\draw [,-stealth ] (2) -- (4) node [ midway,above,  sloped, blue] {}; 
\draw [,-stealth ] (5) -- (1) node [ midway,above,  sloped, blue] {}; 
\draw [,-stealth ] (5) -- (2) node [ midway,above,  sloped, blue] {}; 
\draw [,-stealth ] (4) -- (5) node [ midway,above,  sloped, blue] {}; 

\end{tikzpicture}\label{fig:dhh2}}\\
\subfloat[]{
\begin{tikzpicture}[scale=1.2]
\draw[help lines, step=10pt,dotted](-.2,-.2) grid (2.2,1.9);
\node [fill=black,circle, inner sep=1pt,label=225:{\tiny $x_1$}] (1) at (0,0) {};
\node [fill=black,circle, inner sep=1pt,label=-45:{\tiny $x_2$}] (2) at (2,0) {};
\node [fill=black,circle, inner sep=1pt,label=180:{\tiny $x_4$}] (4) at (0,1) {};
\node [fill=black,circle, inner sep=1pt,label=45:{\tiny $x_3$}] (3) at (1,1.5) {};
\node [fill=black,circle, inner sep=1pt,label=90:{\tiny $x_5$}] (5) at (.5,1) {};

\draw [,-stealth ] (1) -- (2) node [ midway,above,  sloped, blue] {}; 
\draw [,-stealth ] (3) -- (5) node [ midway,above,  sloped, blue] {}; 
\draw [,-stealth ] (2) -- (3) node [ midway,above,  sloped, blue] {}; 
\draw [,-stealth ] (2) -- (4) node [ midway,above,  sloped, blue] {}; 
\draw [,-stealth ] (5) -- (1) node [ midway,above,  sloped, blue] {}; 
\draw [,-stealth ] (5) -- (2) node [ midway,above,  sloped, blue] {}; 
\draw [,-stealth ] (4) -- (5) node [ midway,above,  sloped, blue] {}; 

\end{tikzpicture}\label{fig:dhd1}
}
\subfloat[]{
\begin{tikzpicture}[scale=1.2]
\draw[help lines, step=10pt,dotted](-.2,-.2) grid (2.2,1.9);
\node [fill=black,circle, inner sep=1pt,label=225:{\tiny $x_1$}] (1) at (0,0) {};
\node [fill=black,circle, inner sep=1pt,label=-45:{\tiny $x_2$}] (2) at (2,0) {};
\node [fill=black,circle, inner sep=1pt,label=180:{\tiny $x_4$}] (4) at (0,1) {};
\node [fill=black,circle, inner sep=1pt,label=45:{\tiny $x_3$}] (3) at (1,1.5) {};
\node [fill=black,circle, inner sep=1pt,label=90:{\tiny $x_5$}] (5) at (.5,1) {};

\draw [,stealth-stealth ] (1) -- (2) node [ midway,above,  sloped, blue] {}; 
\draw [,stealth-stealth ] (3) -- (5) node [ midway,above,  sloped, blue] {}; 
\draw [,stealth-stealth ] (2) -- (3) node [ midway,above,  sloped, blue] {}; 
\draw [,stealth-stealth ] (2) -- (4) node [ midway,above,  sloped, blue] {}; 
\draw [,stealth-stealth ] (5) -- (1) node [ midway,above,  sloped, blue] {}; 
\draw [,stealth-stealth ] (5) -- (2) node [ midway,above,  sloped, blue] {}; 
\draw [,stealth-stealth ] (4) -- (5) node [ midway,above,  sloped, blue] {}; 
\draw [,stealth-stealth ] (3) -- (1) node [ midway,above,  sloped, blue] {}; 
\draw [,stealth-stealth ] (4) -- (1) node [ midway,above,  sloped, blue] {}; 
\draw [,stealth-stealth ] (4) -- (3) node [ midway,above,  sloped, blue] {}; 
\end{tikzpicture}\label{fig:dhd2}

}

\end{center}
\caption{We represent in $(a)$ a configuration of 5 points in the plane. Figures $(b)$ and $(c)$ represent two possible $h$-graph for a formation control problem. Figures $(d)$ and $(e)$ two possible $\delta$-graph. Observe that the $\delta$ graph of Figure $(e)$ is fully connected, hence every agent knows the global objective.}
\end{figure}
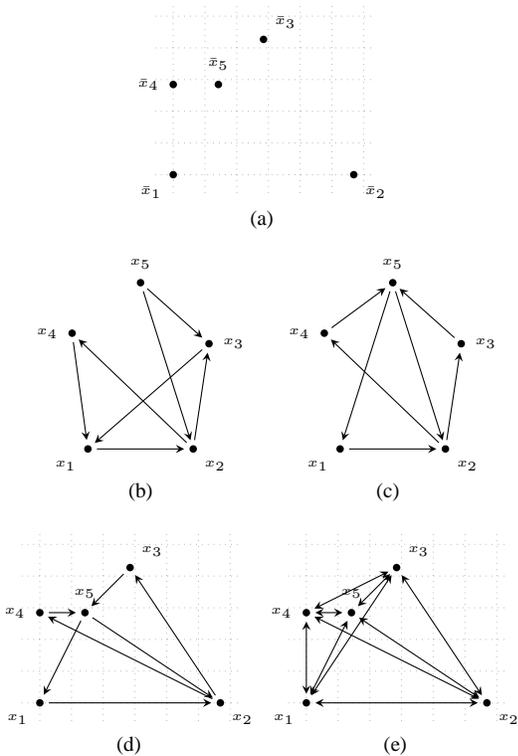

Formation  control problems are concerned with stabilization, either local or type-A. Two different flavors have been studied in the literature: 
\begin{enumerate}

\item {\bf Stabilize at a given framework}:  the global objective is described as the stabilization at the framework described by $\mu$. In this case, the cardinality of $\mathcal E_d$, the set of design equilibria, is one (up to mirror symmetry). Rigidity theory tells us that $G_\delta$ needs to be \emph{globally rigid}~\cite{belabbascdc11sub1}.

\item {\bf Stabilize at one of many framework}: Given a configuration of n-points ($n \geq 4$) $\mu$, let $G_\delta$ be a \emph{minimally rigid graph}. 
The objective is to stabilize at \emph{any} frameworks such that the edge lengths in $G_\delta$ are satisfied. Because the graph is  minimally rigid---and not globally rigid---there are  several frameworks which have the same edge lengths for edges of $G_\delta$ (see Figure~\ref{fig:4formations} for an example, or in Figure~\ref{fig:dhd1}, taking the mirror symmetric of $\bar x_1$ with respect to the $\bar x_2 -\bar x_5$ axis yields a framework with similar edge lengths). In this case, $\mathcal E_d$ is given by all frameworks which satisfy the given edge lengths and the $\delta_i$'s are of the range only type.
\end{enumerate}

We revisit these ideas in Section~\ref{sec:2cycles}. We conclude this section by mentioning  broad open questions in formation control:

\begin{enumerate}
\item How many frameworks satisfy a given set of edge lengths? We have given a lower bound in a particular case in~\cite{belabbascdc11sub1}, but the general case is not settled.
\item How sparse can the graphs $G_\delta$ and $ G_h$  be in order to guarantee the existence of robust $u_i(\delta_i;h_i)$ that yield type-A stabilization?
\item  How sparse can the graphs $G_\delta$ and $G_h$ be in order to guarantee the  existence of robust $u_i(\delta_i;h_i)$ that yield have local stabilization around any point in $\mathcal E_d$?
\end{enumerate}

While rigidity theory  clearly has a role to play in a complete understanding of the $\delta$-graph, it is not clear that it will have more than a supporting role for investigations related to the $h$-graph.

In the case of the $\delta$-graph, a first obvious result is that minimal rigidity (and Laman theorem~\cite{belabbascdc11sub1}) yields a "minimal" undirected $\delta$-graph: a less dense graph is not giving agents enough about the global objective to allow them to satisfy it. The case of directed formation is already much more complex.

We provide partial answers to these question  in Section~\ref{sec:2cycles}.

\section{The two-cycles formation and other known results}\label{sec:2cycles}

\begin{figure}
\begin{center}
\subfloat[The two-cyles formation]{
\begin{tikzpicture}[scale=1.2] 
\node [fill=black,circle, inner sep=1pt,label=-90:$x_1$] (1) at ( 0, 0) {};
\node [fill=black,circle, inner sep=1pt,label=180:$x_2$] (2) at (-1 ,1) {};
\node [fill=black,circle, inner sep=1pt,label=90:$x_3$] (3) at (0 ,2) {};
\node [fill=black,circle, inner sep=1pt,label=0:$x_4$] (4) at (1 ,1) {};

\draw [-stealth ] (1) -- (2);
\draw [-stealth ] (3) -- (1);
\draw [-stealth] (4) -- (3);
\draw [-stealth] (2) -- (3);
\draw [-stealth ] (1) -- (4);
\end{tikzpicture}\label{fig:2cycleshg}}\qquad
\subfloat[The triangle formation]{
\begin{tikzpicture}[scale=2.0] 
\node [fill=black,circle, inner sep=1pt,label=-90:$x_1$] (1) at ( 0, 0) {};
\node [fill=black,circle, inner sep=1pt,label=0:$x_2$] (2) at (1 ,0) {};
\node [fill=black,circle, inner sep=1pt,label=90:$x_3$] (3) at (0 ,1) {};

\draw [-stealth ] (1) -- (2);
\draw [-stealth ] (3) -- (1);
\draw [-stealth] (2) -- (3);
\end{tikzpicture}\label{fig:triangform}}
\end{center}
\caption{}
\end{figure}

We present some known results in formation control  and illustrate in this section the notions introduced in this paper on the 2-cycles formation, which was  exhibited in~\cite{cao2010festschrift} as an example of the difficulty to make progress in formation control when there are "loops of information" in the system. It was conjectured~\cite{cao07cdc} that formation control problems whose objective is minimally rigid, and whose underlying  $\delta$-graph (the $h$-graph was assumed to be the same as the $\delta$-graph) has no vertices with outvalence larger that two were  globally (or type-A)  stabilizable. Since then, we have shown it was not the case for the 2-cycles.

The two-cycles is the formation represented in Figure~\ref{fig:2cycleshg}. Let $x_i \in \R^2$, $i=1\ldots 4$ represent the position of the $4$ agents in the plane. We define the vectors 
\begin{equation}
\label{eq:defz}
\left\lbrace \begin{array}{rcl}
z_1 &=& x_2-x_1 \\
z_2 &=& x_3-x_2 \\
z_3 &=& x_1-x_3\\
z_4 &=& x_3-x_4 \\
z_5 &=& x_4-x_1
\end{array}\right.
\end{equation}Hence, the observation function are given by \begin{multline}\label{eq:defh} h_1(x)=(\|z_1\|,\|z_5\|, z_1^T z_5), h_2(x)=\|z_2\|, h_3(x)=\|z_3\|,\\ h_4(x)=\|z_4\|.\end{multline}

With the notation of Figure~\ref{fig:xbar}, we let $\mu=[\bar x_2, \bar x_3, \bar x_4]$ parametrize a configuration of four points    in the plane. We let $\|\bar x_2\| = d_1$, $\|\bar x_4\|=d_5$, $ \|\bar x_3-\bar x_2\| = d_2$, etc. We take the $\delta$-graph to be the same as the $h$-graph and consider the range only case. Hence,  the functions $\delta_i$ are given by
\begin{equation}
\label{eq:defdel}
\delta_1(\mu) = (d_1,d_5), \delta_2(\mu) =d_2,  \delta_3(\mu) = d_3, \delta_4(\mu) =d_5, 
\end{equation} 

 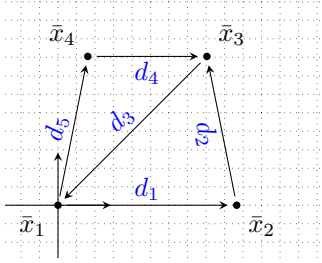
\begin{figure}[ht]\begin{center}
  \begin{tikzpicture}[scale=.7] 
  \draw [-stealth, ] (-1, 0 ) -- (1, 0);
  \draw [-stealth, ] (0,-1  ) -- (0,1);
  \draw[very thin, dotted,step=10pt](-1,-1) grid (5,4);

     \draw[rotate=135,scale=1.6] 	(0,0) node [fill=black,circle, inner sep=1pt,label=-135:$\bar x_1$]  (1) {}
      								(-1.5, -1.5) node [fill=black,circle, inner sep=1pt,label=-45:$\bar x_2$]  (2) {}
      								(0,-2.5) node [fill=black,circle, inner sep=1pt,label=45:$\bar x_3$] (3)  {}
      							(1,-1.5) node [fill=black,circle, inner sep=1pt,label=135:$\bar x_4$] (4)  {};
  
  \draw [-stealth, , ] (1) -- (2) node [ midway,above,  sloped, blue] {$d_1$};
  \draw [-stealth, ] (1) -- (4) node [ midway,above,  sloped, blue] {$d_5$};
  \draw [-stealth, ] (2) -- (3) node [ midway,below,  sloped, blue] {$d_2$};
  \draw [-stealth, ] (4) -- (3) node [ midway,below,  sloped, blue] {$d_4$};
  \draw [-stealth, ] (3) -- (1) node [ midway,above,  sloped, blue] {$d_3$};
  \end{tikzpicture}
  \end{center}
\caption{Any framework in the plane with $\bar x_1 \neq \bar x_2$ is congruent to a framework with $\bar x_1=(0,0) $ and $\bar x_2$ on the $x$-axis}\label{fig:xbar}
\end{figure}

It is convenient to introduce  variables for the error in edge lengths:
$$e_i= z_i^Tz_i-d_i.$$

The set of vector fields that  respect both the invariance of the system under the $SE(2)$ action as presented in~\cite{belabbascdc11sub1} is  given by

Hence a general control law for such a system is
\begin{equation}\label{eq:dyns}
\left\lbrace\begin{array}{rcl}
\dot x_1 &=& u_{11}(\delta_1(\mu);h_1(x)) g_{11}(x) \\ && \mbox{          }+ u_{12}(\delta_1(\mu);h_1(x)) g_{12}(x) \\
\dot x_2 &=& u_2(\delta_2(\mu);h_2(x)) g_2(x) \\
\dot x_3 &=& u_3(\delta_3(\mu);h_3(x)) g_3(x) \\
\dot x_4 &=& u_4(\delta_4(\mu);h_4(x)) g_4(x)
\end{array}\right.
\end{equation}
with \begin{multline*}g_{11}(x)=(x_2-x_1); g_{12}(x)=(x_4-x_1); g_2(x)=x_3-x_2;\\ g_3(x)=x_1-x_3\mbox{ and } g_4(x)=x_3-x_4.\end{multline*} We denote by $\mathcal F$ the space of control systems of the type of Equation~\eqref{eq:dyns}, with the $u_i$ smooth real-valued functions of their argument. We equip $\mathcal F$ with the $C^r$ topology.

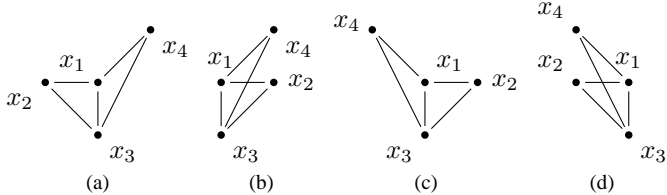
\begin{figure}[ht]
\begin{center}
\subfloat[]{
\begin{tikzpicture}[scale=.7] 
\node [fill=black,circle, inner sep=1pt,label=135:$x_1$] (1) at ( 0, 0) {};
\node [fill=black,circle, inner sep=1pt,label=-135:$x_2$] (2) at (-1 ,0) {};
\node [fill=black,circle, inner sep=1pt,label=-45:$x_3$] (3) at (0 ,-1) {};
\node [fill=black,circle, inner sep=1pt,label=-45:$x_4$] (4) at (1 ,1) {};

\draw [ ] (1) -- (2);
\draw [] (3) -- (1);
\draw [] (4) -- (3);
\draw [] (2) -- (3);
\draw [] (1) -- (4);

\end{tikzpicture}

} 
\subfloat[]{
\begin{tikzpicture}[scale=.7] 
\node [fill=black,circle, inner sep=1pt,label=90:$x_1$] (1) at ( 0, 0) {};
\node [fill=black,circle, inner sep=1pt,label=0:$x_2$] (2) at (1 ,0) {};
\node [fill=black,circle, inner sep=1pt,label=-45:$x_3$] (3) at (0 ,-1) {};
\node [fill=black,circle, inner sep=1pt,label=-45:$x_4$] (4) at (1 ,1) {};

\draw [ ] (1) -- (2);
\draw [] (3) -- (1);
\draw [] (4) -- (3);
\draw [] (2) -- (3);
\draw [] (1) -- (4);

\end{tikzpicture}
} 
\subfloat[]{
\begin{tikzpicture}[scale=.7] 
\node [fill=black,circle, inner sep=1pt,label=45:$x_1$] (1) at ( 0, 0) {};
\node [fill=black,circle, inner sep=1pt,label=0:$x_2$] (2) at (1 ,0) {};
\node [fill=black,circle, inner sep=1pt,label=-135:$x_3$] (3) at (0 ,-1) {};
\node [fill=black,circle, inner sep=1pt,label=180-45:$x_4$] (4) at (-1 ,1) {};

\draw [ ] (1) -- (2);
\draw [] (3) -- (1);
\draw [] (4) -- (3);
\draw [] (2) -- (3);
\draw [] (1) -- (4);

\end{tikzpicture}
} 
\subfloat[]{
\begin{tikzpicture}[scale=.7] 
\node [fill=black,circle, inner sep=1pt,label=90:$x_1$] (1) at ( 0, 0) {};
\node [fill=black,circle, inner sep=1pt,label=135:$x_2$] (2) at (-1 ,0) {};
\node [fill=black,circle, inner sep=1pt,label=-45:$x_3$] (3) at (0 ,-1) {};
\node [fill=black,circle, inner sep=1pt,label=135:$x_4$] (4) at (-1 ,1) {};

\draw [ ] (1) -- (2);
\draw [ ] (3) -- (1);
\draw [] (4) -- (3);
\draw [] (2) -- (3);
\draw [] (1) -- (4);
\end{tikzpicture}
}\end{center}
\caption{\small Four frameworks in the plane that are not equivalent under rotations and translation and that have the same corresponding edge lengths. $(a)$ is the mirror-symmetric of $(c)$ and $(b)$ is the mirror-symmetric of $(d)$.}
\label{fig:4formations}
\end{figure}

The set of design equilibria $\mathcal{E}_d$ for the 2-cycles is of cardinality 4, up to rigid transformations, since there are four frameworks in the plane for which $e_i=0$; they are depicted in Figure~\ref{fig:4formations}. Hence, the global objective\footnote{The global objective of a decentralized system, defined in~\cite{belabbas_icca2011_knowns}, is achieved at configurations $x$ such that $F(\mu;x)=0$.} can be written as an equality objective with $$F(\mu;x) = \left[\begin{matrix} \|x_2-x_1\|^2-d_1 \\
\|x_3-x_2\|^2-d_2 \\
\|x_1-x_3\|^2-d_3 \\
\|x_3-x_4\|^2-d_4 \\
\|x_1-x_4\|^2-d_5
\end{matrix}\right]$$ with the additional requirement of either stabilizing locally any of these equilibria or seeking a control such that the system is type-A stable.

The local objectives\footnote{Local objectives, also defined in~\cite{belabbas_icca2011_knowns}, are achieved by an agent at configurations such that $f_i(\delta_i(\mu);h_i(x)) = 0$.} for each agents are to stabilize at the required distance from their neighbors. For agent 1, we have $$f_1(\delta_1(\mu);h_1(x)) = \left[\begin{matrix} 
\|x_2-x_1\|^2-d_1 \\
\|x_4-x_2\|^2-d_5
\end{matrix}
\right],$$ and for agents $i=2,3,4$:
\begin{eqnarray*}
f_2(\delta_2(\mu);h_2(x)) &=& \|x_3-x_2\|^2-d_2\\
f_3(\delta_3(\mu);h_3(x)) &=& \|x_1-x_3\|^2-d_3\\
f_4(\delta_4(\mu);h_4(x)) &=& \|x_3-x_4\|^2-d_4
\end{eqnarray*}
Satisfying the local objectives clearly implies that the global objective is satisfied.

In general,  the set $\mathcal{E}_a$ of ancillary equilibria depends  on the choice of feedbacks $u_i$. Due to the invariance and distributed nature of the system, we can exhibit some configurations that belong to $\mathcal{E}_a$ for all $u_i$'s.~\cite{belabbasSICOpart1}:

\begin{Proposition}\label{prop:allequ} The set $\mathcal{E}$ contains, in addition to the equilibria in $\mathcal{E}_d$,  the  frameworks characterized by
\begin{enumerate}
\item $z_i=0$ for all $i$,  which corresponds to having all the agents superposed.
\item all $z_i$ are aligned, which corresponds to having all agents on the same one-dimensional subspace in $\R^2$. These frameworks form a three dimensional invariant subspace of the dynamics.
\item  $e_2=e_3=e_4=0$, $z_1$ and $z_5$ are aligned and so that $$u_1(\delta_1;h_1) \|z_1\|=\pm u_5(\delta_1;h_1) \|z_5\|,$$ where the sign depends on whether $z_1$ and $z_5$ point in the same or opposite directions. 
\end{enumerate}
\end{Proposition}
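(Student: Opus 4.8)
The plan is to verify directly, case by case, that each family of configurations listed makes the right-hand side of the dynamics~\eqref{eq:dyns} vanish, \emph{regardless} of the choice of smooth feedbacks $u_i$. The key observation is that every vector field on the right-hand side is a scalar multiple of one of the difference vectors $g_{11}=z_1$, $g_{12}=-z_5$, $g_2=z_2$, $g_3=z_3$, $g_4=z_4$ (up to sign conventions from~\eqref{eq:defz}), so $\dot x=0$ as soon as enough of these $z_i$ vanish, or as soon as the surviving terms cancel in pairs. For item (1), if $z_i=0$ for all $i$ then every $g_{ij}(x)=0$, hence $\dot x_k=0$ for $k=1,\dots,4$ trivially, independently of the $u_i$; one should also note this configuration is consistent (it corresponds to all agents superposed, and the constraint $z_3 = -(z_1+z_2)$ etc.\ is automatically satisfied). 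For item (2), suppose all $z_i$ lie in a common one-dimensional subspace $\ell\subset\R^2$. Then each $\dot x_k$ is a linear combination of vectors in $\ell$, hence lies in $\ell$; the claim to check is that the \emph{subspace} of such configurations is invariant, i.e.\ if all $z_i(0)\in\ell$ then all $z_i(t)\in\ell$. This follows because $\dot z_i$ is, for each $i$, a linear combination of the $\dot x_k$'s (from~\eqref{eq:defz}), each of which lies in $\ell$; so the tangent vector to the configuration stays in the collinear locus. Counting dimensions: fixing $x_1$ by translation and the line $\ell$ through the origin, the collinear frameworks are parametrized by the scalar positions of $x_2,x_3,x_4$ on $\ell$ plus the angle of $\ell$, but modding by rotation kills the angle, leaving a three-dimensional family as stated (before modding, or counting $\ell$ as data, it is the three scalar coordinates).

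The substantive case is item (3). Here we assume $e_2=e_3=e_4=0$, which by the definitions $e_i=z_i^Tz_i-d_i$ and the structure of $f_2,f_3,f_4$ means agents $2,3,4$ are already at their target distances, so we need $f_i=0$ to force $u_i(\delta_i;h_i)=0$ for $i=2,3,4$; here one must be slightly careful, since $u_i$ being a function that vanishes at the target argument is an assumption implicit in "control law for stabilization" — more precisely, the statement should be read as: these are equilibria \emph{for any $u_i$ such that the local objective $f_i=0$ implies $u_i=0$}, which is the standard normalization (any reasonable stabilizing feedback is built so that $u_i$ is a function of the error $e_i$ vanishing at $e_i=0$). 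Granting that, $\dot x_2=\dot x_3=\dot x_4=0$, and the only possibly-nonzero equation is $\dot x_1 = u_{11}g_{11}+u_{12}g_{12} = u_{11}z_1 - u_{12}z_5$ (signs per~\eqref{eq:defz}, where $g_{12}=x_4-x_1 = z_5$... I will fix the bookkeeping against~\eqref{eq:defz} in the writeup). If $z_1$ and $z_5$ are aligned, write $z_5 = \pm(\|z_5\|/\|z_1\|)z_1$ with the sign according to whether they point the same way; then $\dot x_1 = \big(u_{11}\|z_1\| \mp u_{12}\|z_5\|\big)\,z_1/\|z_1\|$, which vanishes exactly when $u_{11}\|z_1\| = \pm u_{12}\|z_5\|$ (noting $u_{11},u_{12}$ here are shorthand for $u_1$ evaluated on its two output components, matching the paper's $u_1,u_5$ notation in the displayed equation). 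One then checks self-consistency: with $e_2=e_3=e_4=0$ and $z_1\parallel z_5$ the remaining edges $d_1,d_5$ are free, so this is a genuine nonempty locus, and since $x_2,x_3,x_4$ are stationary while $\dot x_1\parallel z_1\parallel z_5$ the configuration constraint is preserved, confirming these are honest equilibria.

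The main obstacle I anticipate is not any single computation but rather pinning down the precise hypothesis on the $u_i$ under which item (3) holds: strictly speaking, for a totally arbitrary smooth $u_i$ the set $\mathcal E_a$ does depend on the feedback (as the paragraph preceding the Proposition already warns), so the correct reading is that these configurations lie in $\mathcal E$ for \emph{every} choice of $u_i$ within the natural class where each $u_i$ is a function of its arguments that vanishes when the corresponding local error $e_i$ (respectively the pair $(e_1,e_5)$) vanishes — and for item (3) the extra algebraic relation $u_1\|z_1\|=\pm u_5\|z_5\|$ is precisely what is needed since there agent $1$'s two errors need not both vanish. Items (1) and (2) are genuinely feedback-independent because there the vector fields themselves vanish or are confined to a line; only item (3) carries a condition on the $u_i$'s, and the writeup should make that asymmetry explicit. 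The remaining work — verifying invariance of the collinear subspace and its dimension, and checking the sign conventions between $g_{11},g_{12}$ and $z_1,z_5$ — is routine linear algebra.
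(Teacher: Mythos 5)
The paper itself does not prove Proposition~\ref{prop:allequ}; it only cites the companion work \cite{belabbasSICOpart1}, so there is no in-paper argument to compare against. Judged on its own, your direct case-by-case verification is the right approach, and items (1) and (3) are handled correctly. In particular, you are right to worry about the hypothesis on the $u_i$ in item (3), and your resolution can be made even cleaner than you state it: the vanishing of $u_2,u_3,u_4$ on $\{e_i=0\}$ is not an extra normalization but is \emph{forced} by feasibility. Since $h_2(x)=\|z_2\|$ and $\delta_2(\mu)=d_2$, the value $u_2(d_2;\sqrt{d_2})$ is the same at every configuration with $e_2=0$, including the design frameworks, where $z_2\neq 0$ and $\dot x_2=0$ require $u_2=0$. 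So any $u$ for which $\mathcal{E}_d\subset\mathcal{E}$ automatically satisfies your hypothesis, and the asymmetry with agent $1$ is exactly as you describe: at a configuration with $z_1\parallel z_5$ the argument $(\|z_1\|,\|z_5\|,z_1^Tz_5)$ of $u_{11},u_{12}$ differs from its value at the target, so these need not vanish and the scalar balance condition is genuinely needed. The sign bookkeeping ($g_{12}=z_5$, not $-z_5$) is the routine fix you already flag.

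The one genuine gap is item (2). What you prove is that the collinear locus is an invariant set of the correct dimension; you do not prove that it contributes points to $\mathcal{E}$, which is what the proposition literally asserts (``the set $\mathcal{E}$ contains \dots the frameworks characterized by: all $z_i$ are aligned''). Read literally that claim is false for a generic collinear framework: if $e_2\neq 0$ and $z_2\neq 0$ then $\dot x_2=u_2 z_2\neq 0$ for any stabilizing $u_2$. So you must either (a) state explicitly that the content of item (2) is the invariance of the three-dimensional collinear subspace, with the equilibria it contains being a strict subset, or (b) exhibit collinear equilibria, e.g.\ collinear placements realizing $e_2=e_3=e_4=0$ (always possible by placing $x_2,x_3,x_4,x_1$ on a line at the prescribed distances from $x_3$) on which the agent-$1$ balance equation of item (3) becomes one scalar equation in the remaining degree of freedom; this reduces item (2)'s membership claim to a special case of item (3). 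Your dimension count ($3$ scalar positions $+$ $1$ angle $-$ $1$ rotation) and the tangency argument for invariance are both correct; only the ``contains'' part of the claim is left unaddressed, and your writeup should say so explicitly rather than letting the invariance statement stand in for it.
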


As we have discussed in the companion paper~\cite{belabbas_icca2011_knowns}, the $\delta_i$ given in Equation~\eqref{eq:defdel} do not saturate  the observation functions. Hence there may be some gain in letting $\delta$ be more informative. We know that a maximally informative $\delta_i$ would be given by the identity function. This maximally informative $\delta_i$ was used in~\cite{yu09} to prove that the 2-cycles can be locally stabilized at a given framework in the plane using a relatively simple control law and adjusting some feedback gains. The dynamics used was of the type of Equation~\ref{eq:dyns} with the control law   \begin{equation}\label{eq:controlyu}u_i = k_i e_i, i=1,\ldots,5\end{equation} where the $k_i$ are constant real-valued gains used by the agents to locally stabilize a given framework (i.e. $k_i=k_i(\mu)$).

We can restate the theorem in the language of this paper as follows: let $E^4$  be the space of all configurations of  $4$ points in the plane.

\begin{Theorem}[Reformulation of~\cite{yu09}]\label{th:anderson}
If we let $\delta_i(\mu)=\mu$ for all $i$ and $h_i$ as in Equation~\eqref{eq:defh}, there exists $u_i(\mu; h_i(x))$ such that  for all $\mu$, the framework parametrized by $\mu$ is locally stable for the system  of Equation~\eqref{eq:dyns} with controls $u_i$. In fact, a  control law  of the type of Equation~\eqref{eq:controlyu} works for configurations of $n$ points in $\R^n$, with $h$-graph  given by a minimally rigid graph with outvalence at each node at most two (and  $\delta_i$ being maximally informative).
\end{Theorem}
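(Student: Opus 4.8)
The plan is to verify, at the design framework $\mu$, that the linearization of the closed-loop system~\eqref{eq:dyns} with the feedback $u_i = k_i e_i$ can be made Hurwitz on the quotient $E^4$ by choosing the gains $k_i = k_i(\mu)$. Since the system is $SE(2)$-invariant, the linearization at an equilibrium automatically has a $3$-dimensional kernel coming from the group action (two translations, one rotation); the relevant object is the reduced linearization on the $(2n-3)$-dimensional tangent space of the configuration space $\Cp(n-2)\times(0,\infty)$, and we only need that reduced operator to be Hurwitz. So the first step is to compute $\partial f_{ij}/\partial x$ at the target framework, using that $e_i = z_i^Tz_i - d_i$ vanishes there, so only the terms where the derivative hits $e_i$ survive. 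This is where the minimal rigidity hypothesis and the outvalence-$\le 2$ hypothesis enter.

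First I would write the closed-loop vector field agent-by-agent. For an agent of outvalence one, $\dot x_i = k_i e_i\, g_i(x)$ with $g_i(x) = x_{l}-x_i = \pm z_{\sigma}$ the relevant edge vector; differentiating and using $e_i|_{\mu}=0$ gives a rank-one contribution $2k_i\, g_i g_i^T$ (up to signs and the $\partial e_i/\partial x$ row, which is $\propto z_\sigma^T$ on the appropriate coordinate block). For the single outvalence-two agent (agent $1$ here), the two terms $u_{11}g_{11} + u_{12}g_{12}$ contribute $2k_1 z_1 z_1^T + 2k_5 z_5 z_5^T$ on the $x_1$-block. Collecting these, the reduced Jacobian is (minus) a product $R^T K R$ where $R$ is essentially the rigidity matrix $\partial\delta/\partial x|_E$ of the $h$-graph and $K = \mathrm{diag}(k_i)$. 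Because the $h$-graph is minimally rigid, $R$ restricted to the quotient tangent space is square and invertible (rank $2n-3$), so $R^T K R$ is invertible for generic $K$; the content is to choose the signs/magnitudes of the $k_i$ so that $R^T K R$ is positive definite, equivalently so that $-R^TKR$ is Hurwitz.

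The main obstacle — and the reason the ``loops of information'' matter — is that the Jacobian is \emph{not} symmetric when the graph is directed: each agent only controls its own block, so the gain matrix multiplies $R$ on one side only and the true Jacobian has the block structure of a directed incidence pattern, not the symmetric form $R^TKR$. Thus the real work is a perturbation/continuation argument: start from an undirected-like regime (or from large gains on a spanning minimally rigid subgraph) where positive-definiteness is transparent, and show that the directed corrections can be dominated by scaling the gains hierarchically, exploiting that outvalence $\le 2$ keeps each row of the coupling sparse. One clean way is to order the agents so that the block-triangular part dominates: pick the gains $k_i$ in decreasing orders of magnitude following a construction order of the minimally rigid graph (a Henneberg-type sequence), so the Jacobian becomes a small perturbation of a block-triangular matrix with Hurwitz diagonal blocks; then a standard Gershgorin/Lyapunov argument closes it. Finally, since the estimates are strict inequalities, they persist under small perturbations of $\mu$ and under $C^r$-small perturbations of the $u_i$, giving robustness and the claimed generalization to $n$ points in $\R^n$ with a minimally rigid $h$-graph of outvalence $\le 2$; that last generalization only changes bookkeeping ($2n-3 \to $ the appropriate dimension and edge count), not the structure of the argument.
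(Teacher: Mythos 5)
Your opening steps (linearize at the target framework, quotient out the $SE(2)$ action, and recognize the reduced Jacobian as a gain-weighted version of the rigidity matrix of the $h$-graph) coincide with the paper's route. The divergence, and the gap, is in the one step that carries all the difficulty: showing that the block-diagonal gain matrix can be chosen so that the \emph{nonsymmetric} product is Hurwitz. The paper (following~\cite{yu09}) does this by proving a real-matrix analogue of Friedland's inverse multiplicative eigenvalue theorem~\cite{friedland85}: after verifying a nondegeneracy condition on the reduced Jacobian (essentially on its leading principal minors in a suitable ordering), a block-diagonal multiplier exists that places all eigenvalues in the open left half-plane. You replace this with a hierarchical gain assignment along a Henneberg construction order, claiming the Jacobian becomes a small perturbation of a block-triangular matrix. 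That argument fails for exactly the formations the theorem is about. First, the dependency pattern of the $2$-cycles contains the cycles $1\to 2\to 3\to 1$ and $1\to 4\to 3\to 1$, so \emph{no} ordering of the agents makes the Jacobian block-triangular; moreover scaling $k_i$ multiplies the entire $i$-th block row uniformly, so the off-triangular blocks in that row remain of the same order as the diagonal block and never become a ``small perturbation.'' Second, for an outvalence-one agent the diagonal block is $-k_i z z^T$, which is rank one and only negative semidefinite, so the ``Hurwitz diagonal blocks'' premise of your Gershgorin/triangular-dominance step is false; the marginal directions are precisely the residual degrees of freedom that the information loops couple together.

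A correct version of your time-scale-separation idea would require that every nested Schur complement obtained by successively eliminating the fast agents be Hurwitz, and verifying that is equivalent to the inertia/principal-minor condition that the Friedland-type theorem is designed to exploit. So the hierarchical scaling is not an alternative to that lemma; it is at best one way of proving it, with the essential verification still missing. The same omission undoes your closing sentence: the extension to arbitrary minimally rigid $h$-graphs of outvalence at most two is not ``bookkeeping,'' it is precisely the claim that the relevant nondegeneracy condition holds for the reduced rigidity matrix of every such graph at (almost) every framework.
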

\begin{proof}[Sketch of proof]
The proof relies on the linearization of the system around a given framework. It is then showed that by multiplying the Jacobian of the system by a block diagonal matrix---corresponding to the gains---one can make all eigenvalues of the product have negative real part. To this end, a result similar to the one in~\cite{friedland85} is proved for the case of real matrices.
\end{proof}

We  have shown in~\cite{belabbasSICOpart2} that the same result does not hold if  we let the $\delta$-graph be the same as the $h$-graph:

\begin{Theorem}\label{th:localstab}
Given $\delta_i(\mu)$ as in Equation~\eqref{eq:defdel} and $h_i$ as in Equation~\eqref{eq:defh}, there are no \emph{robust} control system in $\mathcal{F}$  that locally stabilize all frameworks in $E^4$. In fact, for any $u_i \in \mathcal U_i$, there exists a set of frameworks of positive measure in $E^4$ that are not locally stabilizable.
\end{Theorem}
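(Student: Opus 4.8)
The plan is to analyse the linearization of the closed loop at a design equilibrium — which is what governs (robust) local stability, and which by the jet-space reduction of~\cite{belabbasSICOpart2} is all one needs to consider — and, using the $SE(2)$-invariance and the minimal rigidity of the $2$-cycles graph, to reduce the theorem to a real-algebraic statement about a $5\times5$ matrix. I would first pass to a reduced linearization in edge-length error coordinates, then isolate the structural constraint imposed by $\delta_2,\delta_3,\delta_4$ being range-only, and finally exhibit the obstruction.

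\emph{Reduction.} Fix a target framework parametrized by $\mu$. For it to be an equilibrium of~\eqref{eq:dyns}, and because $g_{11}=z_1$, $g_{12}=z_5$ are independent at a framework with $x_1,x_2,x_4$ not collinear while $g_2,g_3,g_4\neq0$, the feedbacks $u_{11},u_{12},u_2,u_3,u_4$ all vanish there, so the terms $u_{ij}\,Dg_{ij}$ disappear from the linearization. Since the $2$-cycles graph is minimally rigid ($2n-3=5$ edges on $n=4$ vertices), the five errors $e=(e_1,\dots,e_5)$ are local coordinates on $E^4$ near a generic framework, and $SE(2)$-invariance makes $e\mapsto\dot e$ well defined; using $\dot e_i=2z_i^{T}\dot z_i$ and the $\dot x_i$ from~\eqref{eq:dyns}, the linearization at $e=0$ takes the form $\delta\dot e = 2\,G(\mu)\,U\,\delta e$, with $G(\mu)$ a fixed matrix built from the inner products $z_i^{T}z_j$ of the target edge vectors and $U$ recording the first derivatives of the feedbacks. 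Local asymptotic stability of $\mu$ is then Hurwitz stability of $2GU$.

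\emph{Structure forced by range-only $\delta$.} For $i=2,3,4$ the data $\delta_i(\mu)=d_i$ and $h_i(x)=\|z_i\|$ are scalars and $\|z_i\|=\sqrt{d_i+e_i}$ depends only on $e_i$, so the $i$-th row of $U$ is $\beta_i\,\hat e_i$, a multiple of the $i$-th coordinate covector. For agent $1$, $u_{11},u_{12}$ depend on $h_1=(\|z_1\|,\|z_5\|,z_1^{T}z_5)$, and $z_1^{T}z_5$ expressed in the coordinates $e$ through the locally single-valued rigidity reconstruction is a fixed covector $\phi(\mu)$, so the last two rows of $U$ lie in $\operatorname{span}(\hat e_1,\hat e_5,\phi(\mu))$. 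Hence the admissible linearizations at $\mu$ form a nine-parameter family $2GU$ — three ``edge-local'' rows and two rows in a fixed $3$-plane — in which, for a fixed control law, the gains $\beta_2,\beta_3,\beta_4$ carried by agents $2,3,4$ can depend only on $d_2,d_3,d_4$.

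\emph{The obstruction and where the work is.} The $2$-cycles framework is the double triangle, the triangles $\{1,2,3\}$ and $\{1,3,4\}$ glued along the edge $\{1,3\}$; for each admissible edge-length tuple the four design equilibria of Figure~\ref{fig:4formations} split into two mirror pairs, the reduced linearization being constant on a pair (the $e_i$ and the $z_i^{T}z_j$ are reflection invariant) and differing between the pairs only through the two values $\phi_\pm$ of $z_1^{T}z_5$, whereas the range data $h_2,h_3,h_4=(\sqrt{d_2},\sqrt{d_3},\sqrt{d_4})$ are the same for all four. Thus any control law stabilizing all four equilibria must use the \emph{same} $(\beta_2,\beta_3,\beta_4)$ for both pairs while $G$ and the $3$-plane differ; writing $S_\pm\subset\R^9$ for the semialgebraic set of admissible parameters making $2G(\mu_\pm)U$ Hurwitz, this forces $\operatorname{proj}_{(\beta_2,\beta_3,\beta_4)}(S_+)\cap\operatorname{proj}_{(\beta_2,\beta_3,\beta_4)}(S_-)\neq\varnothing$. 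The hard part — the main obstacle — is the Routh--Hurwitz computation: eliminating the six agent-$1$ parameters from each $S_\pm$ and showing that for a positive-measure set of edge-length tuples these two (open) projections are disjoint, the expected mechanism being that one of the shared gains is constrained to opposite signs by the two reflected frameworks. On that set, for every choice of the $u_i$, the induced $(\beta_2,\beta_3,\beta_4)$ lies (for almost every tuple) in the open complement of one of the projections, so the corresponding reduced linearization robustly has an eigenvalue with positive real part and that mirror pair of design equilibria is unstable; this is the claimed positive-measure set of non-stabilizable frameworks, and the non-existence of a robust control system locally stabilizing all of $E^4$ follows a fortiori.
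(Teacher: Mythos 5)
Your reduction is essentially sound and captures the right structural facts: passing to the edge-error coordinates $e$ (legitimate because the $2$-cycles graph is minimally rigid and the closed loop is $SE(2)$-invariant), noting that at a design equilibrium the feedbacks vanish so only their first derivatives enter the linearization, observing that the range-only data force agents $2,3,4$ to respond to their own $e_i$ alone so that a single gain triple $(\beta_2,\beta_3,\beta_4)$ must serve all four design frameworks sharing the edge lengths, and locating the only difference between the two mirror classes in the inner product $z_1^Tz_5$ (the sole cross-triangle inner product that the edge-coupled dynamics ever see). This is consistent with the paper, which however only offers a one-sentence sketch --- the obstruction is phrased there as ``robustly satisfying the local objectives prevents the global objective from being satisfied,'' and the actual argument in the cited companion work is run through the jet-space/transversality machinery of the appendix rather than a direct Routh--Hurwitz elimination.

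The genuine gap is that the entire mathematical content of the theorem sits in the step you explicitly defer: proving that, for a positive-measure set of edge-length tuples, the projections of the two Hurwitz regions $S_+$ and $S_-$ onto the shared gains $(\beta_2,\beta_3,\beta_4)$ are disjoint. As written you have only reformulated the theorem as a semialgebraic disjointness claim and guessed at a mechanism (``one of the shared gains is forced to opposite signs''); nothing in the proposal excludes the possibility that the two projections always intersect, in which case your own reduction would leave the theorem unproved (and possibly false). To close this you need either the explicit elimination of the six agent-$1$ parameters from the quintic Routh--Hurwitz conditions, or --- more in the spirit of the paper --- a single invariant that survives the elimination, e.g.\ a factorization of $\det(GU)$ or of one Hurwitz minor in which the agent-$1$ contribution appears with a fixed sign while a $\mu$-dependent factor changes sign between the two mirror classes. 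A secondary gap: for the second sentence of the theorem (``for any $u_i\in\mathcal U_i$\dots''), failure to be Hurwitz only yields an eigenvalue with nonnegative real part; to conclude instability you must place the induced $(\beta_2,\beta_3,\beta_4)$ in the \emph{open} exterior of one projection, which again presupposes the unproven disjointness, and in strict form.
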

\begin{proof}[Sketch of proof]
The proof relies on showing that, given the $\delta_i$ and $h_i$, satisfying the local objectives robustly prevents the global stabilization objective to be satisfied.
\end{proof}

Local stabilization of formations with either  symmetric or  cycle-free $h$- and $\delta$-graph, is much easier to handle. We mention here that \emph{linear decentralized control problem} whose information flow was given by a graph without cycles have been studied in~\cite{shah09}.  We cite the following result from formation control, which relies on similar graphs in the directed case:

\begin{Theorem}[Local stabilization of bi-directional formations~\cite{krick08}] Given a configuration of $n$ points in the plane with $\delta$-graph $G=(V,E)$ that is  infinitesimally rigid , and $h$-graph equal to $\delta$-graph, the control law
$$u_i(\delta_i(\mu), h_i(x))= \sum_{j \mbox{ s.t. } (i,j) \in E} ( \|x_i-x_j\|-d_k ) (x_i-x_j) $$ locally stabilizes almost all infinitesimally rigid frameworks. The same holds true for \emph{directed formations}, where the $\delta$- and $h$- graphs are the same and contain \emph{no loops} and every vertex has outvalence of two at the most.
\end{Theorem}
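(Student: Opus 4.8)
For the bidirectional case the plan is to linearize the closed loop at a target framework and read off its spectrum from the rigidity matrix. Fix a framework $x^\ast$ meeting all the edge-length constraints and write $u_i=\sum_{j:(i,j)\in E}(\|x_i-x_j\|-d_{ij})(x_i-x_j)$ for the prescribed control. Because every prefactor $\|x_i-x_j\|-d_{ij}$ vanishes at $x^\ast$, the product rule leaves only the term in which the prefactor is differentiated, and a short computation gives
$$\left.\frac{\partial u}{\partial x}\right|_{x^\ast}=R^{T}DR=:M,$$
where $R=\frac{\partial\delta}{\partial x}\big|_E$ (with rows normalized by the corresponding edge lengths) is the rigidity matrix of the framework and $D$ is a positive diagonal matrix. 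Thus the linearization of $\dot x=-u$ at $x^\ast$ is $\dot\xi=-M\xi$ with $M$ symmetric and positive semidefinite; the same $M$ is, up to a positive constant, the Hessian at $x^\ast$ of the sum-of-squared-edge-errors potential, which is non-increasing along the closed loop and serves as a Lyapunov function.

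The decisive step is to identify $\ker M$. Since $M=R^{T}DR$ with $D\succ 0$, one has $\ker M=\ker R$, and infinitesimal rigidity is exactly the statement $\operatorname{rank}R=2n-3$, i.e.\ $\ker R$ is three-dimensional and consists precisely of the infinitesimal rigid motions (translations and rotations) of the framework. Hence $M\succeq 0$ has a three-dimensional kernel and all its other eigenvalues are strictly positive, so $-M$ has a three-dimensional center subspace --- tangent to the $SE(2)$-orbit of $x^\ast$, which is an honest manifold of equilibria --- and is otherwise Hurwitz. I would then invoke the reduction/center-manifold theorem for a normally hyperbolic manifold of equilibria to conclude that the $SE(2)$-orbit of $x^\ast$ is locally asymptotically stable, i.e.\ $x^\ast$ is stabilized modulo rigid motions, with the Lyapunov function above certifying the basin. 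The ``almost all'' qualifier discards the measure-zero set of infinitesimally rigid frameworks on which the normalization or the reduction degenerates (e.g.\ coincident vertices).

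For the directed, acyclic, outvalence-at-most-two case the closed loop is no longer gradient-like, so the plan changes to a cascade argument, exactly the mechanism behind Theorem~\ref{th:anderson}. Because the $\delta$-graph (which equals the $h$-graph) has no directed cycles, topologically sort its vertices; in a Henneberg-type order the first agent has outvalence $0$, the second outvalence $1$, and every later agent outvalence $2$, matching the Laman count $2n-3$ of a minimally rigid planar graph. After fixing a slice transverse to the $SE(2)$-orbit --- pin the first agent and the direction to the second --- the reduced Jacobian at the target framework is block lower-triangular, with a scalar diagonal block for the second agent's radial coordinate and $2\times2$ diagonal blocks for agents $3,\dots,n$, each block being the linearization of ``agent $i$ moves to the intersection of the two circles centered at its already-placed out-neighbours''. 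Infinitesimal rigidity --- transversality of those two circles --- makes each $2\times2$ block nonsingular for a generic target framework, and the gains in the control law are chosen to make every diagonal block Hurwitz; since the spectrum of a block-triangular matrix is the union of the spectra of its diagonal blocks, the reduced Jacobian is Hurwitz and the target orbit is locally exponentially stable.

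I expect the $SE(2)$-invariance to be the main obstacle in the bidirectional case: there is never an isolated equilibrium, so ``local stability'' has to be read modulo rigid motions and justified by normal hyperbolicity rather than a naive Lyapunov estimate, and the one fact that makes this work is the rank identity $\operatorname{rank}R=2n-3$, which forces $\ker M$ to be \emph{exactly} the three symmetry directions and no more. In the directed case the corresponding difficulty is structural: one has to check that acyclicity really does produce a block-triangular reduced Jacobian and that the bound ``outvalence at most two'' is precisely what keeps each per-agent diagonal block a $2\times2$ block that is generically invertible and can be made Hurwitz --- remove the bound and the per-agent subproblems become over- or under-determined and the cascade collapses.
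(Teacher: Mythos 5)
Your proposal is correct and takes the same route as the paper, which for this cited result gives only the one-line sketch ``linearization of the system about an equilibrium'': your computation of the linearization as $-R^{T}DR$, the identification of its kernel with the infinitesimal rigid motions via $\operatorname{rank}R=2n-3$, and the center-manifold/normal-hyperbolicity reduction are exactly the details behind that sketch, and your cascade argument for the acyclic directed case is the standard completion. The only cosmetic point is that in the directed case no gain tuning is needed --- each $2\times 2$ diagonal block is already minus a sum of two rank-one positive semidefinite matrices, hence Hurwitz whenever the agent and its two out-neighbours are non-collinear, which is what the ``almost all'' qualifier excludes.
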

\begin{proof}[Sketch of proof]
The proof is  based on a linearization of the system about an equilibrium.
\end{proof}
For a description of the configurations that are excluded by the qualifier  "almost all", we refer the reader to~\cite{krick08}.

Results about global stabilization of formations, whether directed or undirected, are much more sparse. We mention the result of~\cite{anderson07} about the triangular formation:

\begin{Theorem}[Type-A stability of triangular formation.~\cite{anderson07}] Consider the triangular formation of Figure~\ref{fig:triangform}. The control law $$\dot x_i = (\|x_{i+1}-x_i\|-d_i) (x_{i+1}-x_i)$$ makes the system robustly type-A stable for almost all configurations of points in $E^3$
\end{Theorem}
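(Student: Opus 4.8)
The plan is to follow the linearization strategy already used in the companion works, but now with a careful global analysis of the dynamics of the edge errors rather than of the full state. First I would pass to the reduced coordinates: the $SE(2)$-invariance means the genuine state space is the quotient $E^3 = \mathbb{C}\mathrm{P}(1)\times(0,\infty)$, and the natural variables are the squared-length errors $e_i = \|x_{i+1}-x_i\|^2 - d_i$ for $i=1,2,3$ (indices mod $3$), together with the signed area of the triangle (or equivalently one angular variable). I would write down the closed-loop equations $\dot e_i$ for the control law $\dot x_i = (\|x_{i+1}-x_i\| - d_i)(x_{i+1}-x_i)$; because each agent's velocity depends only on its own edge error, the $\dot e_i$ close up into a system on the $e$-variables plus a geometric quantity that encodes whether the current configuration is a genuine triangle, a degenerate (collinear) configuration, or a coincident one.

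Next I would enumerate $\mathcal{E}$. The design set $\mathcal{E}_d$ consists of the (up to congruence, unique) triangle with the prescribed edge lengths, where $e_1=e_2=e_3=0$ and the area is nonzero. By Proposition~\ref{prop:allequ}-style reasoning specialized to three agents, the ancillary equilibria $\mathcal{E}_a$ are the collinear configurations for which the control vector fields cancel: these occur on the invariant collinear subspace, and one checks there is a finite set of them (the degenerate "triangles" where the three points lie on a line with signed lengths satisfying the cancellation relations). I would verify $\mathcal{E}$ is finite for generic $d=(d_1,d_2,d_3)$, which is what Definition~1 requires.

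Then comes the eigenvalue computation, which is the technical heart. At the true triangle I would compute the Jacobian of the reduced $e$-dynamics: it should factor as (a positive-definite diagonal matrix of the $\|x_{i+1}-x_i\|$ factors) times (the Gram-type matrix built from the edge vectors), and infinitesimal rigidity of the triangle — which holds automatically for a nondegenerate triangle — forces this product to be Hurwitz, giving $\mathcal{E}_d \subset \mathcal{E}_s$. At each collinear ancillary equilibrium I would show the linearization has an eigenvalue with positive real part; the mechanism is exactly the degeneracy of the rigidity matrix on the collinear locus, which produces an unstable direction transverse to the collinear subspace (moving the points off the line). This shows $\mathcal{E}_a \subset \mathcal{E}_u$, hence $\mathcal{E}_s \subset \mathcal{E}_d$, i.e. type-A stability. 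Robustness then follows because all these spectral conditions (Hurwitz at the triangle, a positive eigenvalue at each ancillary point) are open conditions on the $C^r$ control and on $\mu$, and they persist under small perturbations that also keep $\mathcal{E}$ finite; the "almost all configurations" caveat absorbs the measure-zero set of $d$ for which the triangle degenerates or extra coincidences among ancillary equilibria occur.

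The main obstacle I expect is the sign analysis at the collinear ancillary equilibria: one must argue that the single "off-the-line" perturbation direction is genuinely unstable (real part strictly positive) rather than merely non-negative, and this requires looking past the linear term — or equivalently choosing the right invariant coordinate transverse to the collinear subspace — because the collinear set is itself invariant and the linearization restricted to it may be neutral or stable. Handling that direction carefully, and confirming that no collinear equilibrium is inadvertently stable, is where the real work lies; everything else is the routine rigidity-matrix-is-Hurwitz argument already invoked in Theorems~\ref{th:anderson}--\ref{th:localstab}.
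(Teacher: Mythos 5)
Your plan diverges from the proof the paper actually sketches: the cited argument does not proceed by linearizing at every equilibrium, but by exhibiting a Lyapunov-like function for the closed-loop system and showing that every trajectory outside a \emph{thin set} of initial conditions (a set of codimension one, essentially tied to the collinear/degenerate configurations) converges to the desired triangle; type-A stability is then a rephrasing of that almost-global convergence statement. The difference matters, because the step you yourself flag as ``the main obstacle'' is a genuine gap that the linearization route cannot close. The ancillary equilibria of this control law are degenerate configurations (coincident points, and the collinear locus, which is invariant because every $\dot x_i$ is parallel to an edge vector), and at such configurations the linearization in the direction transverse to the degenerate set is precisely where eigenvalues with zero real part appear. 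The paper's own definition of stable/unstable equilibria explicitly assumes the linearization has no eigenvalues with zero real part, so the dichotomy $\mathcal{E}=\mathcal{E}_s\cup\mathcal{E}_u$ you want to invoke is not even available there without a higher-order (center-manifold) analysis. Saying ``one must look past the linear term'' identifies the problem but is not a proof; the Lyapunov/LaSalle argument is exactly the device that replaces this missing spectral computation, by showing directly that the basin of the bad set has measure zero.

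A second, smaller issue: even if you could establish $\mathcal{E}_s\subset\mathcal{E}_d$ by purely local spectral computations, that alone does not deliver the content of the theorem as the paper interprets it (convergence to $\mathcal{E}_d$ with probability one over initial conditions). You would still need a global argument that almost every trajectory converges to the finite set $\mathcal{E}$ in the first place --- ruling out escape to infinity and non-trivial limit sets on the reduced space $E^3$ --- and nothing in your outline supplies that. The Lyapunov-like function does both jobs at once, which is why the original proof is organized around it. Your description of the desired equilibrium being Hurwitz via the rigidity/Gram-matrix factorization is fine and consistent with the linearization arguments used for Theorems~\ref{th:anderson} and the local-stabilization results, but for this particular theorem the heart of the matter is the global statement, not the local one.
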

\begin{proof}[Sketch of proof]
The proof is based on exhibiting a Lyapunov-like~\cite{cao2010festschrift} function for the system and showing that, except for what is called a "thin set" of initial conditions (i.e. a set of codimension one), the system is globally stable. This can be rephrased using the type-A stability idea as we have done here.
\end{proof}

Even more, type-A stability was shown, using a similar Lyapunov argument, for a broad class of decentralized control law in~\cite{cao2010festschrift}. We have shown that this result does not extend to the 2-cycles:

\begin{Theorem}~\label{th:globstab} There are no robust $u \in \mathcal U$ with $\delta_i$ as in Equation~\eqref{eq:defdel} and $h_i$ as in Equation~\eqref{eq:defh} such that the 2-cycles formation is type-A stable.
\end{Theorem}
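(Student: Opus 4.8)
The plan is to argue by contradiction. Suppose $u\in\mathcal U$ is robust and makes the two-cycles type-A stable. Since type-A stability means that almost every trajectory converges to $\mathcal E_d$, this forces $\mathcal E_s\neq\varnothing$: for every target configuration $\mu$ — in particular for a generic $\mu$, which I now fix — at least one of the four design frameworks of Figure~\ref{fig:4formations} is asymptotically stable, i.e.\ the Jacobian of~\eqref{eq:dyns} there is Hurwitz. I will show that for this $\mu$ one of the universal ancillary configurations of Proposition~\ref{prop:allequ} is \emph{also} asymptotically stable, so that $\mathcal E_s\not\subseteq\mathcal E_d$, contradicting type-A stability. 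Every hypothesis used is an open condition on $u$ in the $C^r$ topology, so the contradiction persists on a neighbourhood of $u$; hence no robust $u\in\mathcal U$ can be type-A stable.

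\emph{Step 1 (constraints from feasibility and from a stable design framework).} Feasibility forces $u_i(\delta_i(\mu);h_i)=0$ whenever $h_i$ realises the target distances of $\delta_i(\mu)$, so that each design framework is an equilibrium (where $e_i=0$, hence $u_i=0$, for all $i$). The Jacobian of~\eqref{eq:dyns} at a design framework is then a sum of rank-one terms, one per edge of the $h$-graph, of the form $(\partial_h u_i)\,g_i r_i^{\,T}$ with $r_i$ the gradient of the associated distance. Requiring this Jacobian to be Hurwitz on the five-dimensional reduced configuration space imposes sign constraints on the slopes $\partial_h u_i$ at the target arguments; in the spirit of the analysis behind Theorem~\ref{th:localstab}, it forces, for generic $\mu$, the restoring signs $\partial_h u_i>0$ for the single-constraint agents $i=2,3,4$. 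These are open conditions on $u$.

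\emph{Step 2 (producing an ancillary sink).} Consider the family of item~3 of Proposition~\ref{prop:allequ}: $e_2=e_3=e_4=0$, the vectors $z_1,z_5$ collinear, and $u_1\|z_1\|=\pm u_5\|z_5\|$. At such a configuration agents $2,3,4$ sit exactly at their target distances, so $u_2=u_3=u_4=0$ and the slopes $\partial_h u_2,\partial_h u_3,\partial_h u_4$ are those of Step 1. Linearising~\eqref{eq:dyns} there, each of agents $2,3,4$ contributes a negative ``radial'' eigenvalue (because $\partial_h u_i>0$), while the remaining directions — agent $1$'s two balanced modes and the modes tangent to the constraint circles — are neutral to first order; through the $\delta g$ terms agent $1$ contributes the scalar $-(u_1+u_5)$ in the direction transverse to the line through $x_2$ and $x_4$, a quantity whose sign varies over the (finitely many) balance points because $u_1+u_5$ and the balance relation $u_1 z_1+u_5 z_5=0$ are tied together by the fixed gap $x_2-x_4$. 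The Hurwitz question thus reduces to a centre-manifold computation in the neutral directions, and one checks — using the Step 1 signs and the balance relation — that at one of the item~3 configurations the reduced dynamics is asymptotically stable. That configuration is then a sink lying in $\mathcal E_a$, so $\mathcal E_s\not\subseteq\mathcal E_d$: the system is not type-A stable at $\mu$, contradicting the assumption. The restoring-sign condition, the Hurwitz/centre-manifold condition and the persistence of the item~3 configurations are all stable under $C^r$-small perturbations of $u$ (under which the equilibria merely drift, as in the Remark of Section~\ref{sec:sing}), so the same argument runs for every control law near $u$, and therefore no robust $u$ is type-A stable.

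\emph{Main obstacle.} The crux is Step 2: one must carry out the centre-manifold reduction at what looks like a near-degenerate family of equilibria — the neutral directions coming from the constraint circles together with agent $1$'s two balanced modes — and show that the quadratic (and higher) terms, whose signs are pinned down only up to the balance constraint, make one of the item~3 configurations attracting, doing this robustly in $u$ and uniformly over a generic set of $\mu$, which also requires checking that for such $\mu$ the item~3 family is non-empty and its points are non-degenerate in the needed sense. A parallel route I would try uses instead the three-dimensional invariant subspace of collinear configurations (item~2 of Proposition~\ref{prop:allequ}), where the restricted dynamics is lower-dimensional: there one exhibits a sink of the restricted system and checks, again via the Step 1 sign conditions, that its two transverse eigenvalues are also negative.
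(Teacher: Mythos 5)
The paper itself only sketches this proof, deferring the substance to the companion papers: the stated mechanism is that ``the theory of bifurcation and singularities'' forces the appearance of stable ancillary equilibria for every admissible feedback. Your overall target --- exhibit a stable equilibrium in $\mathcal{E}_a$ among the configurations of Proposition~\ref{prop:allequ}, exploiting the fact that agents $2,3,4$ see exactly the same observation values (hence the same $u_i$ and the same slopes) at the item-3 configurations as at the design framework --- is the right general idea and is consistent with that sketch. That observation is genuinely the leverage that the decentralized structure provides.

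The gap is that your Step 2 is not an argument but a promissory note, and it sits exactly where the theorem's content lives. You write that ``one checks \dots that at one of the item~3 configurations the reduced dynamics is asymptotically stable,'' but the data you have assembled cannot support that check. The sign conditions of Step 1 constrain $u_2,u_3,u_4$ and their slopes only at the target argument $\sqrt{d_i}$ (which, as you note, transfers to the item-3 family), but they say nothing about $u_{11},u_{12}$ at the arguments $(\|z_1\|,\|z_5\|,z_1^Tz_5)$ that occur on that family, which are far from the values taken at any design framework. Consequently you cannot determine from Step 1 whether the balance set $u_{11}z_1+u_{12}z_5=0$ with $z_1,z_5$ aligned is even non-empty for a given $u$, let alone locate a balance point at which the transverse eigenvalue $-(u_{11}+u_{12})$ and the reduced (centre-manifold) dynamics are both stable. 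Closing this requires a global or topological input --- an index/degree count over the one-parameter family of aligned configurations, or the bifurcation- and singularity-theoretic analysis (jet transversality over the parameter $\mu$) that the companion paper actually uses --- and that input is precisely what is missing. Two smaller points: the formal definition of type-A stability ($\mathcal{E}_s\subset\mathcal{E}_d$) is vacuously satisfied when $\mathcal{E}_s=\varnothing$, so your opening inference $\mathcal{E}_s\neq\varnothing$ relies on the informal ``probability one'' reading and should be flagged; and your alternative route through the collinear invariant subspace (item 2) faces the same obstruction, since the restricted three-dimensional dynamics again depends on $u_{11},u_{12}$ at unconstrained arguments.
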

\begin{proof}[Sketch of proof]
The theory of bifurcation and singularities was used to show that the decentralized structure of the system forces the appearance of stable, ancillary equilibria for all feedback laws $u \in \mathcal U$.
\end{proof}

Whether letting $\delta_i$ to be the identity would allow to find a robust, type-A stabilizing control for the two-cycles is an open question. 
\section{Summary}

We have defined formation control problems in the plane and introduced some relevant mathematical concepts: type-A stability and robustness. The presentation of formation control highlighted the difference between the decentralization of the system as it is commonly understood (agents have a partial information about the state of the ensemble) and the decentralization of the objective (agents have a partial information about what configuration the formation is asked to reach). We have seen that the latter type of decentralization, though not often acknowledged,  affects the behavior of formations greatly (compare Theorems~\ref{th:anderson} and~\ref{th:localstab}). Finally, we have presented some open questions in formation control.

\appendix
\section{Singularities of vector fields, jet spaces and transversality}\label{app:appsing}

The main tool for handling  genericity and robustness in function spaces is Thom's transversality theorem. We will arrive at the result by building onto the simpler concept of transversality of linear subspaces.

Thom's theorem roughly answers the following type of questions: given  a function $u$ from a manifold $M$ to a manifold $N$, and some relations between the derivatives of different orders of this function (e.g. $u''+u'-u=0$), under what circumstances are these relations preserved under small perturbations of the function? For example, if a real-valued function has a zero at some point, under a small perturbation of this function, the zero will persist \emph{generically} for $u$. On the other hand, if a real-valued function vanishes with its second derivative also being zero, under a small perturbation  this property will be lost, see Figure~\ref{fig:illpertxsq}. The crux of Thom's theorem is to show that considering only a ``small subset'' of perturbations (the integrable perturbations as we will see below) in the set of all perturbations in jet-spaces is sufficient. 

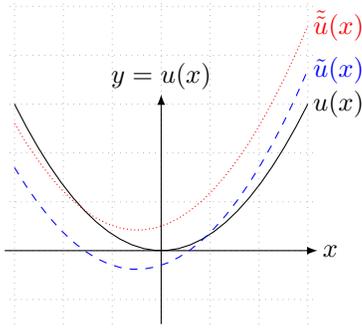
\begin{figure}[]
\begin{center}
\begin{tikzpicture}[scale=.65,domain=-3:3] 
\draw[help lines, dotted,color=gray] (-3.1,-1.5) grid (3.1,5.1);
\draw[->] (-3.2,0) -- (3.2,0) node[right] {$x$}; 
\draw[->] (0,-1.5) -- (0,3.2) node[above] {$y=u(x)$};
\draw[color=black]	plot (\x,1/3*\x*\x)	node[right] {$u(x)$}; 
\draw[color=blue,dashed]	plot (\x,1/3*\x*\x+1/3*\x-.3)	node[right] {$\tilde u(x) $}; 
\draw[color=red,densely dotted] plot (\x,{1/2.9*\x*\x+1/3*\x+.5}) node[right] {$\tilde{\tilde{u}}(x)$};
\end{tikzpicture}
\caption{If we let $\mathcal P$ be the property of vanishing with a zero derivative. We will prove in this section that $\rceil \mathcal P$ is generic and thus  $\mathcal P$  is not robust. Let $u(x)$ be a function which satisfy $\mathcal P$. For any small perturbations, it will either vanish with a non-zero derivative---as illustrated with $\tilde u(x)$, dashed curve--- or not vanish at all---as illustrated with $\tilde{\tilde{u}}(x) $, dotted curve. Both $\tilde u(x)$ and $\tilde{\tilde{u}}(x)$ are transversal to the manifold defined by $y=0$ everywhere, whereas $u(x)$ is not.}\label{fig:illpertxsq}
\end{center}
\end{figure}

Let  $A, B \subset \R^n$ be linear subspaces. They are \emph{transversal} if $$\R^n = A \oplus B, $$ where $\oplus$ denotes the direct sum. For example, a plane and a line not contained in the plane are transversal in $\R^3$. The notion of transversality can be extended to maps as follows: given $$F_1 : \R^n \rightarrow \R^m\mbox{ and } F_2: \R^l \rightarrow \R^m,$$ we say that $F_1$ and $F_2$ are transversal at a point $(x_1,x_2) \in \R^n \times \R^l$ if one of the two following conditions is met:

\begin{enumerate}
\item $F_1(x_1) \neq F_2(x_2)$
\item If $F_1(x_1) = F_2(x_2)$, then the matrix $\left[ \begin{matrix}\frac{\partial F_1}{\partial x} \\\frac{\partial F_2}{\partial x} \end{matrix}\right]$ is of full rank.
\end{enumerate}
In particular, if $l+n < m$ then $F_1$ and $F_2$ are transversal only where they do not map to the same point. This definition extends immediately to smooth functions between smooth manifolds: given $$F_1:M_1\rightarrow N \mbox{  and } F_2:M_2 \rightarrow N,$$ we say that $F_1$ and $F_2$ are transversal at $(x_1,x_2)\in M_1\times M_2$ if either $F_1(x_1)\neq F_2(x_2)$ or $F(x_1)=F(x_2)$ and the tangent space of $N$ at $F(x_1)$ is the direct sum of the images of the tangent spaces of $M_1$ and $M_2$ under $F_1$ and $F_2$ respectively, i.e. $$T_{F_1(x_1)}N = F_{1*}T_{x_1}M_1 \oplus F_{2*}T_{x_2}M_2$$ where $F_*$ is the push-forward~\cite{warner83} of $F$. 

\begin{Example}
Take $M_1=\R$ and $M_2=N=\R^2$ and let $F_1(x_1)=x_1v+b_1$ and $F_2(x_2)=Ax_2+b_2$, where $A \in \R^{2 \times 2}, b_2, v,x_2 \in \R^2$ and $x \in \R$. If $b_1\neq b_2$, then $F_1(0) \neq F_2(0)$ and $F_1$ is transversal to $F_2$ at $(0,0,0)$. If $b_1=b_2$, then $F_1(0)=F_2(0)$	and the functions are transversal if the span of $v$ and the columns of $A$ is $\R^2$.
\end{Example}

The notion of transversality that is of interest to us is a straightforward extension of the transversality of maps:

\begin{Definition}[Transversality] \label{def:transversality} Let $F:M \rightarrow N$ be a smooth map and let $C$ be a submanifold of $N$. Then $F$ is \emph{transversal} to $C$ at a given point if, at that point,  $F$ is transversal to the embedding $i:C \rightarrow N$ of $C$ into $N$.
\end{Definition}

\begin{Example}
Take $N=\R^3$ with coordinates $u,v,w$ and $C$ be the u-v plane. Let $F:\R \rightarrow \R^3:x \rightarrow [x,2x,3x]^T$. Then the map $F$ is transversal to $C$ everywhere.
\end{Example}

\begin{Example}
Let $F(x):\R \rightarrow \R^3$ be any smooth curve in $\R^3$ and $C$ be the $u$-axis. At points where $F(x) \in C$, the tangent vector to $C$ and the tangent vector to $F$ will span at most a two-dimensional subspace in $\R^3$.  Hence, $F$ is transversal to $C$ only at the points where $F(x) \notin C$.
\end{Example}

\subsection{Jet Spaces}

Let $F,G :M \rightarrow N$ be  smooth maps between smooth manifolds $M$ and $N$ endowed with a metric. We say that $F$ and $G$ are k- equivalent at $x_0 \in M$ if in a neighborhood of $x_0$ we have $$\|F(x)-G(x)\| = o(\|x-x_0\|^k).$$  One can verify~\cite{arnold72} that k-equivalence is independent of the choice of metrics on $M$ and $N$ and that it is an equivalence relation on maps. In fact, the above definition can be recast as saying that  $F$ and $G$ are  0-equivalent at $x_0$ if $$F(x_0)=G(x_0),$$  1-equivalent if in addition $$\frac{\partial F}{\partial x}|_{x_0}=\frac{\partial G}{\partial x}|_{x_0},$$ and so forth. We define the k-jet of a smooth map to be its k-equivalence class:

\begin{Definition}  
The k-jet of $F:M \rightarrow N$ at $x_0$ is $$J_{x_0}^k(F) = \lbrace G:M \rightarrow N\mbox{ s.t. } G \mbox{ is }\mbox{k-equivalent to } F \rbrace .$$
\end{Definition}
Hence, the 0-jet of $F$ at $x_0$ is $F(x_0)$; the 1-jet is $(F(x_0), \frac{\partial F}{\partial x}|_{x_0})$, etc.  For example, the constant function $0$ and $\sin(x)$ have the same 0-jet at $x=0$ and $x$ and $\sin(x)$ have the same 1-jet at $0$. 

We define: $$J^k(M,N) = \mbox{ Space of k- jets from }M\mbox{ to } N .$$

A 0-jet is thus determined by a point in $M$ and a point in $N$, and thus $J^0(M,N)$ nothing more than the Cartesian product of $M$ and $N$: $$J^0(M,N) = M \times N.$$ Since a 1-jet is determined by a pair of points,for the 0-jet part, and a matrix of dimension $\dim M \times \dim N $, for the Jacobian of the function at $x_0$, we see that $\dim J^1(M,N)=\dim M+\dim N + \dim M \dim N$. We cannot say in general that $J^1(M,N)$ is the cartesian product of $J^0$ with $\R^{m\times n}$ since the product may be twisted. Similar relations are obtained for higher jet-spaces~\cite{arnold72}

Given a function $F:M \rightarrow N$, we call its \emph{k-jet extension} the map given by
$$j^k_F(x):M \rightarrow J^K(M,N): x \rightarrow (F(x), \frac{\partial F}{\partial x}(x),\ldots, \frac{\partial^k F}{\partial x^k}(x)).$$

\begin{Example} Let $M=N=\R$. The k-jet space is  $J^k(\R,\R)= \R \times \R \times \ldots \times \R = \R^{k+2}$. Take $F(x)=\sin(x)$; the $2-$jet extension of $F$ is $$j^2_{\sin}(x) = (x,\sin(x), \cos(x),-\sin(x)).$$
If we take $M=N=\R^2$ and $F(x)=Ax$ for $A \in \R^{2 \times 2}$, then $$j_{Ax}^k(x)=(x,Ax, A, 0, \ldots, 0).$$
\end{Example}

\begin{Remark}
The concepts presented in this section also trivially apply to vector fields on $M$, by letting $N=TM $.
\end{Remark}
While to any function $F:M \rightarrow N$, we can assign a k-jet extension $j_F^k:M \rightarrow J^k(M,N)$, the inverse is not true: there are maps $G:M \rightarrow J^k(M,N)$ which do not correspond to functions from $M$ to $N$ as there are some obvious integrability conditions that need to be satisfied. For example, if we let $$G:\R^n \rightarrow J^1(\R^n): G(x)=(x,Ax,B),$$ then  $G$ is a 1-jet extension of a function if and only if $B = A$.

The power of the transversality theorem of Thom is that it allows one  to draw conclusions about transversality properties in general, and genericity in particular, by \emph{solely looking at perturbations in jet spaces that are jet extensions}---a much smaller set than  all perturbations in jet-spaces, since these include the much larger set of non-integrable perturbations.  

We recall that the $C^r$ topology is a metric topology. It is induced by a metric that takes into account the function and its first $r$ derivatives. We have:

\begin{Theorem}[Thom's transversality]\label{th:thom} Let $C$ be a  regular submanifold of the jet space
$J^k(M,N)$. Then the set of maps $f : M  \rightarrow  N$ whose k-jet extensions are transversal to $C$
is an everywhere dense intersection of open sets in the space of smooth maps for the $C^r$ topology, $1 \leq r \leq \infty$.
\end{Theorem}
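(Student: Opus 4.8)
The plan is to reduce the statement to Sard's theorem by the standard parametric-transversality device, after cutting both $M$ and $C$ into countably many compact pieces so that the transversal maps become a countable intersection of open dense sets. For the local input, I would fix a chart $U\subset M$ with compact closure over which $N$ is also coordinatized, so that over $U$ the jet space is identified with $U\times N\times W$, with $W$ recording the partial derivatives of orders $1$ through $k$. Given $f$ and a bump function $\beta$ supported in $U$ and equal to $1$ on a smaller set $U'$, consider the family $f_s(x)=f(x)+\beta(x)\,p_s(x)$, where $p_s$ ranges over polynomial maps of degree at most $k$ with coefficient vector $s$ in a Euclidean parameter space $S$. The key fact is that over $U'$ the map $\Phi\colon(x,s)\mapsto j^k_{f_s}(x)$ is a submersion in the $s$-directions --- the $k$-jet at any point of a polynomial of degree $\le k$ can be prescribed arbitrarily --- so $\Phi$ is transversal to $C$ over $U'$ for trivial reasons.

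Next, the density step. I would apply the parametric transversality theorem (a consequence of Sard's theorem applied, near $C$, to $\Phi$ followed by a submersion onto the normal directions of $C$) to conclude that for almost every $s\in S$ the map $j^k_{f_s}$ is transversal to $C$ at every point of $U'$. Because $\beta\,p_s$ together with its derivatives up to order $r$ is arbitrarily small once $s$ is small, such an $s$ makes $f_s$ as $C^r$-close to $f$ as desired. Covering a given compact $K\subset M$ by finitely many such charts and applying this successively --- keeping each later perturbation small enough not to undo the transversality already arranged over earlier charts, which is legitimate because transversality over a compact set is an open condition --- produces, arbitrarily near $f$, a map transversal to $C$ at every point of $K$.

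To globalize, exhaust $M$ by compacta $K_1\subset K_2\subset\cdots$ and $C$ by compacta $L_1\subset L_2\subset\cdots$, and let $T_{ij}$ denote the set of $f$ whose $k$-jet extension is transversal to $C$ at every $x\in K_i$ with $j^k_f(x)\in L_j$. Each $T_{ij}$ is open, by compactness of $K_i$ and $L_j$ together with the uniform neighbourhood supplied by the submersion estimate, and dense by the previous paragraph. The set described in the statement is precisely $\bigcap_{i,j}T_{ij}$, a countable intersection of open dense sets, which is an everywhere dense intersection of open sets in the $C^r$ topology, as claimed.

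I expect the only place real content enters to be the local construction: one must check that adjoining bump-function-localized polynomials of degree $\le k$ yields a jet-extension family that is a submersion onto the jet directions, so that transversality of the family is automatic and Sard's theorem applies, and that this can be carried out with $C^r$-small perturbations. The rest --- openness when $C$ is not closed, and the finite patching within a compactum --- is routine bookkeeping.
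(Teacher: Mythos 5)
The paper offers no proof of this theorem to compare against: Thom's transversality theorem is quoted there as a classical result and is only \emph{used}, in Corollary~\ref{cor:corgent}, to show that vanishing with zero derivative is non-generic. Judged on its own, your sketch is the standard textbook argument (Golubitsky--Guillemin, Arnold): localize in a chart, perturb by a bump-function cut-off of polynomial maps of degree at most $k$ so that the family of $k$-jet extensions is a submersion onto the jet fibre over the smaller set $U'$, invoke the parametric transversality theorem (hence Sard), and assemble the global claim as a countable intersection indexed by compact exhaustions of $M$ and $C$. You correctly isolate the one genuinely non-routine ingredient, namely that the coefficient space of degree-$\le k$ polynomials surjects linearly onto the $k$-jet at each fixed point. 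The one place where your ``routine bookkeeping'' hides a real subtlety is the openness of the sets $T_{ij}$: transversality of $j^k_f$ to $C$ at $x$ is a condition on the \emph{derivative} of the jet extension, hence on the $(k+1)$-jet of $f$, so $T_{ij}$ is manifestly open only in the $C^{r}$ topology for $r\ge k+1$ (or $C^\infty$); for $1\le r\le k$, which the statement explicitly includes, the density part of your argument survives unchanged but the decomposition into $C^r$-open sets requires an extra step rather than following from ``the uniform neighbourhood supplied by the submersion estimate.'' With that caveat noted, the proposal is a faithful sketch of the accepted proof.
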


A typical application of the theorem is to prove that vector fields with degenerate zeros are not generic. We here prove a version of this result that is of interest to us. 

\begin{Corollary}\label{cor:corgent}
Functions in $\mathcal{C}^\infty(M) $ whose derivative at a zero vanish are not generic.
\end{Corollary}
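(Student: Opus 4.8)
The plan is to prove the contrapositive‐style statement: show that the \emph{negated} property is generic, and then use the remark preceding Theorem~\ref{th:thom} to conclude that the original property cannot be. Throughout, interpret "functions in $\mathcal C^\infty(M)$" as real-valued functions $F:M\to\R$ (so $N=\R$; the $TM$ version of the Remark is not needed here), and let $\mathcal P$ be the property that $F$ possesses some $x_0$ with $F(x_0)=0$ and $\frac{\partial F}{\partial x}|_{x_0}=0$. The goal is to apply Thom's transversality theorem to a suitable submanifold of $J^1(M,\R)$.

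First I would set up that submanifold. In local coordinates a point of $J^1(M,\R)$ is $(x,y,p)$ with $x\in M$, $y\in\R$, and $p$ recording the first derivative (a row of length $n:=\dim M$), so $\dim J^1(M,\R)=2n+1$. Let
\[
C=\{(x,y,p)\in J^1(M,\R)\ :\ y=0,\ p=0\}.
\]
This is a closed, embedded (hence regular) submanifold of $J^1(M,\R)$ of dimension $n$, i.e. of codimension $n+1$. By construction, $j^1_F(x)\in C$ exactly when $F(x)=0$ and $\frac{\partial F}{\partial x}|_x=0$, so $\{F:\rceil\mathcal P(F)\}$ is precisely the set of $F$ with $j^1_F(M)\cap C=\varnothing$. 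The crucial numerical point is a dimension count: the image of $j^1_F:M\to J^1(M,\R)$ has dimension at most $n$, and $\dim C=n$, so at any hypothetical point $x$ with $j^1_F(x)\in C$ the subspace $(j^1_F)_\ast T_xM+T_{j^1_F(x)}C$ has dimension at most $2n<2n+1=\dim J^1(M,\R)$. Hence transversality of $j^1_F$ to $C$ at such a point is impossible, and therefore $j^1_F$ is transversal to $C$ \emph{if and only if} $j^1_F(M)\cap C=\varnothing$ — that is, if and only if $\rceil\mathcal P(F)$ holds.

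Next I would invoke Theorem~\ref{th:thom} with this $C$: the set of $F\in\mathcal C^\infty(M)$ whose $1$-jet extension is transversal to $C$ is an everywhere dense intersection of open sets in the $C^r$ topology. By the previous paragraph this set equals $\{F:\rceil\mathcal P(F)\}$, so $\rceil\mathcal P$ is generic. To finish, recall that $\mathcal C^\infty(M)$ with the $C^r$ topology is a Baire space, so a residual set is dense; if $\mathcal P$ were also generic, then $\mathcal P$ and $\rceil\mathcal P$ would each hold on a residual set, whose intersection would again be residual and in particular nonempty — a contradiction. Hence $\mathcal P$ is not generic, which is exactly the assertion of the corollary (and the rigorous counterpart of Figure~\ref{fig:illpertxsq}).

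I expect the only real subtlety to be the bookkeeping: checking that $C$ is a bona fide regular submanifold of $J^1(M,\R)$ and that its codimension $n+1$ strictly exceeds $\dim(j^1_F(M))\le n$, which is what makes transversality collapse to disjointness. Everything after that is a direct application of Theorem~\ref{th:thom} and the Baire-category remark; the one place to be careful is simply to state explicitly that we are working with real-valued $F$, so that the relevant jet space is $J^1(M,\R)$ rather than $J^1(M,TM)$.
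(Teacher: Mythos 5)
Your proof is correct, but it routes through Thom's theorem differently than the paper does. You work in the first jet space $J^1(M,\R)$ and take $C=\{y=0,\ p=0\}$, a regular submanifold of codimension $n+1>\dim M$; the dimension count then forces transversality of $j^1_F$ to $C$ to be equivalent to $j^1_F(M)\cap C=\varnothing$, so Theorem~\ref{th:thom} yields that the set of functions with \emph{no} degenerate zero is residual, and your Baire-category step converts ``$\rceil\mathcal P$ generic'' into ``$\mathcal P$ not generic.'' The paper instead stays in the zero-jet space $J^0(M,\R)=M\times\R$ with $C=M\times\{0\}$, a codimension-one submanifold: there, transversality of the $0$-jet extension $x\mapsto(x,F(x))$ to $C$ at a zero of $F$ is equivalent to the derivative being nonzero at that zero, so genericity of $\rceil\mathcal P$ is read off from the transversality condition itself rather than from an avoidance/dimension argument. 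Both are legitimate instances of the same theorem and reach the same conclusion. Your version generalizes mechanically (e.g.\ to higher-order degeneracies, where the analogous locus in $J^k$ has still larger codimension and is again generically avoided), and it makes explicit the final Baire step that the paper leaves implicit in the remark at the end of Section~\ref{sec:sing}. The paper's version is more economical --- it needs only $0$-jets, which is why it is attributed to the ``weak transversality theorem'' --- and it delivers the slightly sharper picture of Figure~\ref{fig:illpertxsq}: generically every zero persists and is crossed transversally, rather than merely ``the degenerate locus is empty.'' No gap in your argument; the one thing worth double-checking in your write-up is that you state $C$ in coordinate-free terms (the zero section of the bundle of $1$-jets over $M$) so that its regularity as a submanifold does not depend on the local trivialization of $J^1(M,\R)$.
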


In other words, the corollary deals with the intuitive fact that if $u(x)=0$, then generically $u'(x)\neq 0$. This result also goes under the name of weak-transversality theorem~\cite{guillemin74}.
\begin{proof}
 Consider the space of 0-jets $J^0(M,\R)$. In this space, let $C$ be the set of 0-jets which vanish, i.e. $C=(x,0) \subset J^0$. A function $u$ is transversal to this set if either it does not vanish, or where it vanishes we have that  the matrix $$\left[ \begin{matrix} 1 & 1 \\ 0 & \frac{\partial f}{\partial x}\end{matrix}\right] $$ is of full rank. Hence, transversality to $C$ at a zero implies that the derivative of the function is non-zero. The result is thus a consequence of Theorem~\ref{th:thom}. 
\end{proof}

To picture the situation geometrically, recall that $J^0(M,\R)$ is simply $M \times \R$.  Hence $C$ is $M \times 0 \subset J^0(M,\R)$. The result says that any function that intersects $C$ without crossing (and hence with a zero derivative)  will, under a generic perturbation, either cross $C$  or not intersect $C$ at all, since these two eventualities result in transversality. Figure~\ref{fig:illpertxsq} provides an illustration when $M= \R$.

\bibliographystyle{IEEEtran}
\bibliography{distribcontrolbib}             

\begin{thebibliography}{10}
\providecommand{\url}[1]{#1}
\csname url@samestyle\endcsname
\providecommand{\newblock}{\relax}
\providecommand{\bibinfo}[2]{#2}
\providecommand{\BIBentrySTDinterwordspacing}{\spaceskip=0pt\relax}
\providecommand{\BIBentryALTinterwordstretchfactor}{4}
\providecommand{\BIBentryALTinterwordspacing}{\spaceskip=\fontdimen2\font plus
\BIBentryALTinterwordstretchfactor\fontdimen3\font minus
  \fontdimen4\font\relax}
\providecommand{\BIBforeignlanguage}[2]{{%
\expandafter\ifx\csname l@#1\endcsname\relax
\typeout{** WARNING: IEEEtran.bst: No hyphenation pattern has been}%
\typeout{** loaded for the language `#1'. Using the pattern for}%
\typeout{** the default language instead.}%
\else
\language=\csname l@#1\endcsname
\fi
#2}}
\providecommand{\BIBdecl}{\relax}
\BIBdecl

\bibitem{belabbasSICOpart2}
M.-A. Belabbas, ``Decentralized formation control part ii: Algebraic aspects of
  information flow and singularities,'' \emph{In Submission, available at
  arXiv:1101.2421}, 2011.

\bibitem{belabbas_icca2011_knowns}
------, ``Known unknowns, unknown unknowns and information flow: new results
  and challenges in decentralized control,'' in \emph{Proceedings of the 9th
  International Conference on Control and Automation}, 2011.

\bibitem{belabbascdc11sub2}
------, ``On global feedback stabilization of decentralized formation
  control,'' in \emph{To appear in Proceedings of 2011 Conference on Decision
  and Control}, 2011.

\bibitem{cao2010festschrift}
M.~Cao, A.~Morse, C.~Yu, B.~Anderson, and S.~Dasgupta, ``Maintaining a
  directed, triangular formation of mobile autonomous agents,'' \emph{Comm. in
  Information and Systems}, vol.~11, no.~1, pp. 1--16, 2011.

\bibitem{bloch_nonholonbook_03}
A.~Bloch, \emph{Nonholonomic mechanics and control}.\hskip 1em plus 0.5em minus
  0.4em\relax Springer, 2003.

\bibitem{belabbascdc11sub1}
M.-A. Belabbas, ``Geometric properties of formation control,'' in
  \emph{Submitted , available on my website}, 2011.

\bibitem{milnormorse}
J.~Milnor, \emph{Morse Theory}.\hskip 1em plus 0.5em minus 0.4em\relax
  Princeton University Press, 1963.

\bibitem{warner83}
F.~Warner, \emph{Foundations of Differentiable Manifolds and Lie Groups}.\hskip
  1em plus 0.5em minus 0.4em\relax Springer, 1983.

\bibitem{arnold_bifurcation}
V.~I. Arnol’d, V.~Afraimovich, Y.~Il’yashenko, and L.~Shil’nikov,
  \emph{Bifurcation theory in Dynamical Systems V. Encyclopaedia of
  Mathematical Sciences}.\hskip 1em plus 0.5em minus 0.4em\relax Springer,
  1994.

\bibitem{belabbasSICOpart1}
M.-A. Belabbas, ``Decentralized formation control part i: Geometric aspects,''
  \emph{In Submission, available at arXiv:1101.2416}, 2011.

\bibitem{graver93}
J.~E. Graver, B.~Servatius, and H.~Servatius, \emph{Combinatorial
  Rigidity}.\hskip 1em plus 0.5em minus 0.4em\relax Amer. Math. Soc., 1993.

\bibitem{cao07cdc}
M.~Cao, A.~S. Morse, C.~Yu, B.~D.~O. Anderson, and S.~Dasgupta, ``Controlling a
  triangular formation of mobile autonomous agents,'' in \emph{Proc. of the
  46th IEEE Conference on Decision and Control}, vol.~46, 2007, pp. 3603--3608.

\bibitem{yu09}
C.~Yu, B.~Anderson, S.~Dasgupta, and B.~Fidan, ``Control of minimally
  persistent formations in the plane,'' \emph{SIAM Journal on Control and
  Optimization}, vol.~48, no.~1, pp. 206--233, 2009.

\bibitem{shah09}
P.~Shah and P.~A. Parrilo, ``A poset framework to model decentralized control
  problems,'' in \emph{Proc. of the 48th IEEE Conference on Decision and
  Control}, vol.~48, 2009.

\bibitem{krick08}
L.~Krick, M.~E. Broucke, and B.~Francis, ``Stabilization of infinitesimally
  rigid formations of multi-robot networks,'' \emph{International Journal of
  Control}, 2008.

\bibitem{anderson07}
B.~Anderson, C.~Yu, S.~Dasgupta, and A.~S. Morse, ``Control of a three-coleader
  formation in the plane,'' \emph{Systems and control letters}, vol.~56, pp.
  573--578, 2007.

\bibitem{arnold72}
V.~Arnold, ``Lectures on bifurcations and versal families,'' in \emph{Russian
  Math. Surveys}, vol.~27, 1972, pp. 54--123.

\bibitem{guillemin74}
V.~Guillemin and M.~Golubitsky, \emph{Stable mappings and their
  singularities}.\hskip 1em plus 0.5em minus 0.4em\relax Springer, 1974.

\end{thebibliography}

\end{document}